\numberwithin{equation}{section}
\newtheorem{thm}{Theorem}[section]
\newtheorem{cor}{Corollary}[section]
\newtheorem{rem}{Remark}[section]
\theoremstyle{definition}
\newcommand{\X}{\pmb{\mathscr{X}}}
\newcommand{\G}{\pmb{\mathscr{G}}}
\newcommand{\E}{\pmb{\mathscr{E}}}
\newcommand{\A}{\pmb{\mathscr{A}}}
\newcommand{\B}{\pmb{\mathscr{B}}}
\newcommand{\U}{\pmb{\mathscr{U}}}
\newcommand{\C}{\pmb{\mathscr{C}}}
\newcommand{\bfG}{\mathbf{G}}
\newcommand{\bfE}{\mathbf{E}}
\newcommand{\bfA}{\mathbf{A}}
\newcommand{\bfB}{\mathbf{B}}
\newcommand{\bfU}{\mathbf{U}}
\newcommand{\bfV}{\mathbf{V}}
\newcommand{\bfF}{\mathbf{F}}
\newcommand{\bfD}{\mathbf{D}}
\newcommand{\bfK}{\mathbf{K}}
\newcommand{\bfC}{\mathbf{C}}
\newcommand{\bfP}{\mathbf{P}}
\newcommand{\bfQ}{\mathbf{Q}}
\newcommand{\bfR}{\mathbf{R}}
\newcommand{\gnote}[1]{\textcolor{blue}{#1}}
\title{A low-rank, high-order implicit-explicit integrator for three-dimensional convection-diffusion equations}
\author[1]{Joseph Nakao}
\author[2]{Gianluca Ceruti}
\author[2]{Lukas Einkemmer}
\affil[1]{Department of Mathematics and Statistics, Swarthmore College, Swarthmore, PA, USA}
\affil[2]{Department of Mathematics, University of Innsbruck, Innsbruck, Tyrol, Austria}
\date{}
\begin{document}

\maketitle


\begin{abstract}
\noindent This paper presents a rank-adaptive implicit-explicit integrator for the tensor approximation of three-dimensional convection-diffusion equations. In particular, the recently developed Reduced Augmentation Implicit Low-rank (RAIL) integrator is extended from the two-dimensional matrix case to the three-dimensional tensor case. The solutions are approximated using a Tucker tensor decomposition. The RAIL integrator first discretizes the partial differential equation fully in space and time using traditional methods. Here, spectral methods are considered for spatial discretizations, and implicit-explicit Runge-Kutta (IMEX RK) methods are used for time discretization. At each RK stage: the bases computed at the previous stages are augmented and reduced to construct projection subspaces. After updating the bases in a dimension-by-dimension manner, a Galerkin projection is performed to update the coefficients stored in the core tensor. As such, the algorithm balances high-order accuracy from spanning as many bases as possible from previous stages, with efficiency from leveraging low-rank structures in the solution. A post-processing step follows to maintain a low-rank solution while conserving mass, momentum, and energy. We validate the proposed method on a number of convection-diffusion problems, including a Fokker-Planck model, and a 3d viscous Burgers' equation.
\end{abstract}

\noindent\textbf{Keywords:} low-rank, Basis Update and Galerkin (BUG), Tucker decomposition, implicit-explicit method, convection-diffusion equation\\
\noindent\textbf{AMS Subject Classifications:} 65M06


\section{Introduction}

High-dimensional time-dependent partial differential equations (PDEs) play a central role in modeling complex physical phenomena, including plasma physics, heat and mass transport, reactive flows, and quantum dynamics. However, developing efficient structure-preserving numerical methods to solve such problems remains a formidable challenge. In particular, most standard grid-based discretization techniques suffer from the \textit{curse of dimensionality} -- the number of degrees of freedom grows exponentially as the number of dimensions increases. Recently, several works have exploited low-rank structures in high-dimensional PDE solutions to reduce the storage complexity, hence mitigating the curse of dimensionality, and increasing computational efficiency. Many such works have relied on low-rank decompositions of the high-dimensional solutions stored in multi-index arrays/tensors. A novel low-rank method was proposed in \cite{nakao2025reduced} for solving two-dimensional advection-diffusion and Fokker-Planck equations with high-order accuracy and mass conservation; this method is referred to as the (2d) Reduced Augmentation Implicit Low-rank (RAIL) method. In this paper, we extend the RAIL technique within the framework of three-dimensional solutions stored in a low-rank Tucker tensor decomposition. Our proposed method is low-rank, mass, momentum, and energy conservative, high-order accurate, and can be combined with implicit-explicit (IMEX) time discretizations. Whereas the 2d-RAIL paper \cite{nakao2025reduced} only considered linear advection-diffusion equations, we consider here the more general class of convection-diffusion equations
\begin{equation}\label{eq: conv_diff_eqn}
u_t + \nabla\cdot \mathbf{F}(u) = d\Delta u + c(\mathbf{x},t),\qquad\mathbf{x}\in\mathbf{\Omega}\subset\mathbb{R}^3,\quad t>0,
\end{equation}
for which a low-rank ansatz is imposed on the solution. Here, $\mathbf{F}(u)$ is a convex flux function, $d>0$ is the diffusion coefficient, and $c(\mathbf{x},t)$ is a source term. We assume that the solution, flow field, and source term can all be well approximated by low-rank functions.

While approaches based on the Proper Orthogonal Decomposition (POD), where snapshots of the dynamics are computed and subsequently compressed offline to obtain a reduced-order model of the system, have been successfully applied in various contexts, an alternative approach has emerged over the past decade. In this new approach, a low-rank decomposition of the solution is performed instantaneously, with the reduced space describing the system dynamics being updated at each time step. Many of the recent low-rank methods for solving time-dependent PDEs fall into two categories: \textit{step-and-truncate (SAT)} approach, or \text{dynamical low-rank (DLR)} approach. Both methods have found particular success in plasma physics and kinetic simulations \cite{einkemmer2025review}, as well as quantum mechanics \cite{Lubich2015a,Meyer2009}, radiative transfer \cite{PengDLR2020spherical,Coughlin2022,Kusch2022power}, and biology \cite{Jahnke2008,Kusch2021radiation,Prugger2023,einkemmer2024chemical,einkemmer2024reaction}.

The SAT approach fully discretizes the dynamics in both space and time --updating the entire low-rank solution-- followed by a truncation procedure at the end of each time-step to maintain a low-rank solution \cite{dolgov2014low,ehrlacher2017dynamical,rodgers2020step, rodgers2022adaptive, rodgers2024tensor, kormann2017low, guo2024local, guo2024conservative}. The forward stepping of the SAT approach often increases the rank due to the addition of many bases, hence the need for an efficient truncation procedure. Many of these recent works were applied to high-dimensional time-dependent kinetic simulations in the context of explicit schemes.

In the DLR approach \cite{Meyer1990,Lubich2008,koch2007dynamical}, the spatially discretized solution is decomposed into time-dependent low-rank factors describing the one-dimensional spatial bases, and a core tensor (matrix in the two-dimensional case) carrying coefficients describing the interactions between the bases. Broadly speaking, a set of time-continuous differential equations for the low-rank factors is derived by projecting the update onto the tangent space of a low-rank manifold. Solving these differential equations, we obtain the updated solution. The resulting differential equations in the original approach \cite{koch2007dynamical} are ill-conditioned in many situations, and regularization is needed \cite{Kieri2016,Nonnenmacher2008}. Substantial efforts over the past decade have focused on developing robust DLR integrators to address this issue, collectively known as the Projector-Splitting Integrator (PSI) \cite{lubich2014projector} and Basis-Update and Galerkin (BUG) integrator \cite{ceruti2022unconventional, ceruti2022rank, ceruti2024parallel}. The DLR and BUG approaches have been extended to high-order tensors \cite{koch2010dynamical,ceruti2022rank,ceruti2021time,ceruti2023rank}. Similar methods include the retraction-based DLR integrators \cite{charous2023dynamically,kieri2019projection,seguin2024low}, and the projected exponential methods \cite{Carrel2023}. We also note extensions to these lines of research include discrete empirical interpolation method (DEIM) and randomized projection methods \cite{ dektor2025collocation, ghahremani2024deim, dektor2024interpolatory,donello2023oblique, carrel2024randDLR,lam2024randomized}. DLR methods have been extended to higher than first order accuracy in \cite{Cassini2022,Einkemmer2018,ceruti2024robust,seguin2024low,kusch2025second,nobile2025robust,kieri2019projection}.

Despite the progress of low-rank explicit time integrators, there remains a great need for high-order low-rank implicit integrators, for which several methods have been developed \cite{nakao2025reduced,el2024krylov,kahza2024sylvester,sands2024high,naderi2025cross}. Second-order error bounds have been derived for implicit DLR methods \cite{ceruti2024robust,kusch2025second}, and higher than second-order accuracy has been numerically observed in \cite{nakao2025reduced,el2024krylov,kahza2024sylvester,naderi2025cross}. Most of the implicit low-rank methods have been developed for the matrix setting \cite{rodgers2023implicit, naderi2025cross, sutti2024implicit, el2024krylov, kahza2024sylvester, appelo2025robust, nakao2025reduced, li2024high, meng2024preconditioning, ding2021dynamical, einkemmer2021asymptotic, sands2024high, frank2025asymptotic}, with some being extended to the tensor setting \cite{kahza2024sylvester,ceruti2022rank,ceruti2021time}. 

The RAIL approach, originally introduced in the matrix setting in \cite{nakao2025reduced}, bridges the SAT and DLR approaches, although it adopts a fundamentally different perspective than DLR methods. Rather than starting from a projected evolution equation on the tangent space of the low-rank manifold, RAIL begins by fully discretizing the full-order problem in space and time using a Runge-Kutta (RK) scheme, much like the SAT approach. Each RK stage is then interpreted as a \textit{local} approximation of the full solution. Instead of evaluating these intermediate stages in full dimension, RAIL computes and, where necessary, replaces them with their low-rank representations by applying a BUG-type procedure at each stage, where BUG is not interpreted as a time-integration method but rather as a robust retraction strategy that preserves high-order time accuracy, as recently observed in \cite{seguin2024low}, thereby maintaining low-rank structure and accuracy throughout the integration process.

This strategy sets RAIL apart from both SAT and BUG integrators. Unlike BUG, it is not inherently constrained by the order-reducing splitting errors introduced by geometric projection, and it offers greater flexibility in constructing time-dependent projection subspaces at \textit{each} RK stage. Although the subspaces used in RAIL resemble those in BUG, the methodology allows the intermediate stages to be updated more generally, allowing for greater flexibility in how the projection spaces evolve during the integration, albeit at the cost of increased intermediate rank in the higher-dimensional setting. In this way, RAIL seeks to balance the simplicity of SAT with the geometric robustness of DLR-based integrators, while supporting high-order accuracy and enhanced adaptability. High-order accuracy is numerically observed in our experiments. The extension to Tucker tensors in the present paper can efficiently solve three-dimensional problems with low-rank structure while conserving macroscopic quantities, but the underlying ideas naturally extend to more intricate tree tensor networks such as those in \cite{ceruti2021time,ceruti2023rank}. In particular, the recursive structure of higher-dimensional decompositions -such as the Tensor Train and Hierarchical Tucker formats- builds upon operations defined in the Tucker framework. As such, the present work represents not only a practical solution for three-dimensional problems, but also an essential intermediate step towards scalable and high-order low-rank integration for higher-dimensional systems that demand implicit time integration.

The present manuscript is organized as follows. In Section 2, we briefly review the Tucker decomposition for third-order tensors. In Section 3, we introduce the 3d-RAIL scheme for convection-diffusion equations. The first-order scheme, high-order extension, and stability and consistency are presented. In Section 4, we present several numerical experiments. Conclusions are made in Section 5, and the Appendix follows afterwards.


\section{The Tucker decomposition of a third-order tensor}\label{sec: tuckerdecomp}

In this section, we provide a brief overview of the low-rank tensor decomposition used to store the solution. Throughout this paper, we follow the notation used in \cite{kolda2009tensor}. Vectors (first-order tensors/one-dimensional arrays) are denoted by boldface lowercase letters, e.g., $\mathbf{a}$. Matrices (second-order tensors/two-dimensional arrays) are denoted by boldface uppercase letters, e.g., $\mathbf{A}$. Third-order tensors/three-dimensional arrays are denoted by boldface Euler script letters, e.g., $\A$. We only concern ourselves with third-order tensors since the present paper applies tensors to efficiently solve three-dimensional equations. When considering higher-dimensional equations, e.g., during kinetic simulations, one could consider a hierarchical tree tensor network, in which case an efficient three-dimensional solver becomes a key ingredient. We refer the reader to the review papers \cite{kolda2009tensor} and \cite{grasedyck2013literature} for a systematic review of various tensor decompositions.

Three-way/third-order tensors naturally emerge from the discretization of the 3D solution to scalar partial differential equations by considering a function $u(x,y,z)$ and uniform computational grids in each dimension,
\begin{equation}\label{eq: grid}
	x_1<x_2<...<x_{N_x},\qquad y_1<y_2<...<y_{N_y},\qquad z_1<z_2<...<z_{N_z}.
\end{equation}

The function $u(x,y,z)$ \gnote{is} discretized over the tensor product of these uniform computational grids and stored in a three-dimensional array, or rather, third-order tensor, $\U\in\mathbb{R}^{N_x\times N_y\times N_z}$. The elements of $\U$ are denoted by $u_{ijk}\approx u(x_i,y_j,z_k)$, for $i=1,...,N_x$, $j=1,...,N_y$ and $k=1,...,N_z$. Due to its cubic storage complexity $\mathcal{O}(N^3)$, directly working with the full tensor $\U$ becomes impractical for large $N$, where we assume $N = N_x = N_y = N_z$. Such a situation naturally arises, for example, when using an extremely fine grid which necessitates a large $N$. Thus, just as a matrix can be decomposed and approximated using a low-rank representation obtained from a truncated singular value decomposition (SVD), a tensor can be treated similarly. Although various tensor decompositions exist, we here focus on the high-dimensional analogue of the SVD: the Tucker decomposition. This decomposition is also known as the higher-order SVD (HOSVD) or higher-order principal component analysis. The Tucker decomposition of a third-order tensor was originally proposed in \cite{tucker1963implications,tucker1966some} and extended to higher-order tensors in \cite{kapteyn1986approach}.

The Tucker decomposition decomposes a third-order tensor $\U$ into a (smaller) \textit{core tensor} which is multiplied by a matrix along each dimension. This is seen in Figure \ref{fig: Tucker} in which the core tensor is $\G$, and the matrices multiplying the core tensor (also called the \textit{factor matrices}) are $\bfV_x, \bfV_y, \bfV_z$. Although the factor matrices need not be orthonormal, this is usually desired. The factor matrices can be interpreted as one-dimensional bases with respect to each variable. Whereas, the core tensor can be interpreted as representing the amount of interaction between the one-dimensional basis vectors. However, it's important to note that the entries of the core tensor should not be thought of analogously to singular values. Unlike the singular values of a matrix, the entries of the core tensor could be negative and are not necessarily ordered, and the core tensor is generally dense.

The core tensor is size $r_x\times r_y\times r_z$, and the factor matrices are sizes $N_x\times r_x$, $N_y\times r_y$, and $N_z\times r_z$, respectively. Since the number of column vectors in each factor matrix is typically different, we say that the \textit{multilinear rank} of the tensor $\U$ is the 3-tuple $(r_x,r_y,r_z)$. Ideally, $r_x\ll N_x$, $r_y\ll N_y$ and $r_z\ll N_z$ so that the storage complexity is significantly reduced. In particular, the storage complexity of a Tucker decomposition is $\mathcal{O}(r^3+dNr)$, where $d$ is the number of spatial dimensions. This is a significant reduction from $\mathcal{O}(N^3)$ if the tensor admits a small multilinear rank. In application, the main objective is to obtain a low-multilinear rank Tucker decomposition that closely approximates the original third-order tensor. A simple example of this at the continuous level is a truncated Fourier series for a $d-$dimensional function for which the Fourier coefficients decay rapidly. Under the low-rank assumption, the Tucker decomposition for third-order tensors offers a very useful tool for reducing the storage and computational complexities.

\begin{figure}[t!]
	\centering
	\includegraphics[width=0.65\textwidth]{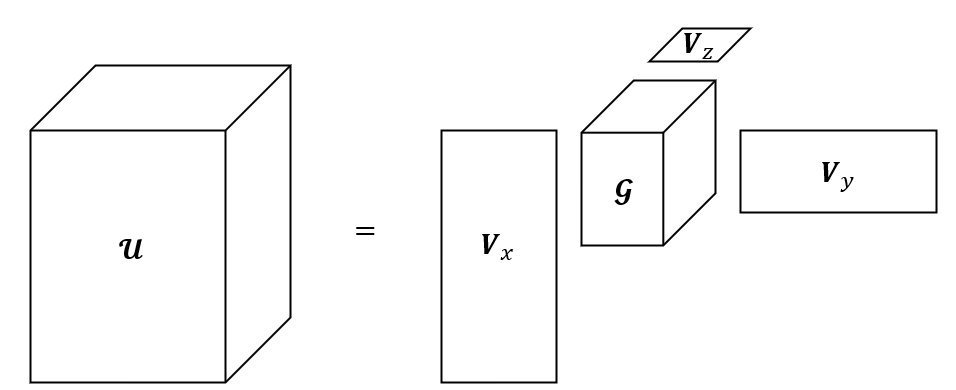}
	\caption{Tucker decomposition of a third-order tensor.}
	\label{fig: Tucker}
\end{figure}

The Tucker decomposition of a third-order tensor $\U$ is represented by
\begin{equation}\label{eq: Tucker}
	\U = \G\times_1\bfV_x\times_2\bfV_y\times_3\bfV_z,
\end{equation}
where $\times_n$ denotes the \textit{mode-$n$} product between a tensor and matrix in the $n$-th dimension. Broadly speaking, the tensor $\G$ is transformed with respect to matrix $\bfV_x$ in the $n$-th dimension. Without loss of generality, the mode-1 product between $\G$ and $\bfV_x$ is a tensor of size $N_x\times r_y\times r_z$ whose elements are
\begin{equation}
	(\G\times_1\bfV^x)_{i'jk} = \sum\limits_{i=1}^{r_x}{g_{ijk}v^x_{i'i}},\qquad i'=1,...,N_x,\ j=1,...,r_y,\ k=1,...,r_z.
\end{equation}

We note that the order of mode-$n$ products is irrelevant in a series of multiplications if the modes are distinct. Naturally, we want to analyze and separate the components of the Tucker decomposition. This can be done by flattening the tensor into a matrix, known as the \textit{mode-$n$ matricization/unfolding/flattening}. Without loss of generality, given a third-order tensor $\U\in\mathbb{R}^{N_x\times N_y\times N_z}$, one can arrange its \textit{mode-$1$ fibers} (fix all but the first index) as the columns of a matrix $\bfU_{(1)}\in\mathbb{R}^{N_x\times N_yN_z}$. The column space of $\bfU_{(1)}$ describes the dependence in $x$, and the row space of $\bfU_{(1)}$ describes the dependence in $y$ and $z$. The mode-$n$ matricizations for the Tucker decomposition \eqref{eq: Tucker} are respectively
\begin{subequations}
\begin{equation}
	\bfU_{(1)} = \bfV_x\bfG_{(1)}\big(\bfV_z\otimes\bfV_y\big)^T =  \bfV_x\bfG_{(1)}\big(\bfV_z^T\otimes\bfV_y^T\big),
\end{equation}
\begin{equation}
	\bfU_{(2)} = \bfV_y\bfG_{(2)}\big(\bfV_z\otimes\bfV_x\big)^T =  \bfV_y\bfG_{(2)}\big(\bfV_z^T\otimes\bfV_x^T\big),
\end{equation}
\begin{equation}
	\bfU_{(3)} = \bfV_z\bfG_{(3)}\big(\bfV_y\otimes\bfV_x\big)^T =  \bfV_z\bfG_{(3)}\big(\bfV_y^T\otimes\bfV_x^T\big),
\end{equation}
\end{subequations}
where $\bfG_{(n)}$ is the mode-$n$ matricization of $\G$. Here $\otimes$ denotes the Kronecker product for matrices. Similarly, one can flatten the tensor $\U$ by arranging all its mode-$n$ fibers (regardless of $n$) into a single column vector, $\text{vec}(\U)\in\mathbb{R}^{N_xN_yN_z}$, known as the \textit{vectorization} of a tensor.
\begin{equation}
	\text{vec}(\U) = \big(\bfV_z\otimes\bfV_y\otimes\bfV_x\big)\text{vec}(\G).
\end{equation} 

A natural question arises: how can we obtain a truncated Tucker decomposition of a third-order tensor? In the matrix case, one can simply truncate the singular values based on a specified tolerance to obtain the optimal low-rank approximation. However, for third-order tensors, various algorithms exist for computing a low-multilinear rank Tucker decomposition that approximates the original tensor; see \cite{kolda2009tensor} for a list of references. In the present paper, we use the \textit{truncated high-order SVD (HOSVD)} algorithm from \cite{de2000multilinear} which produces a Tucker decomposition of a specified multilinear rank $(\tilde{r}_1,\tilde{r}_2,\tilde{r}_3)$. Simply put, the truncated HOSVD takes the first $\tilde{r}_n$ left singular vectors of the matricization $\bfU_{(n)}$; note that the factor matrices are orthonormal. Then, the mode-$n$ products between $\U$ and the transposed factor matrices can be used to construct the core tensor. Alternatively, a specified tolerance on the residual can be used to determine the truncated HOSVD. The truncated HOSVD does not typically produce an optimal approximation, but it does satisfy the error bound given in Theorem \ref{thm: HOSVDbound}.

\begin{thm}[\cite{de2000multilinear,kormann2017low}]\label{thm: HOSVDbound}
Let $\U\in\mathbb{R}^{N_1\times N_2\times N_3}$ be a third order tensor, and let $\U^\star\in\mathbb{R}^{N_1\times N_2\times N_3}$ be the truncated Tucker tensor of multilinear rank $(\tilde{r}_1,\tilde{r}_2,\tilde{r}_3)$ resulting from the truncated HOSVD. Let $\mathcal{T}(\tilde{r}_1,\tilde{r}_2,\tilde{r}_3)$ denote the set of Tucker tensors of multilinear rank $(\tilde{r}_1,\tilde{r}_2,\tilde{r}_3)$. Then, the following error bound holds:
\begin{equation}
	\norm{\U-{\U^\star}}\leq\sqrt{3}\min\limits_{\X\in\mathcal{T}(\tilde{r}_1,\tilde{r}_2,\tilde{r}_3)}{\norm{\U-\X}}.
\end{equation}
\end{thm}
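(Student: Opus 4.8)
The plan is to establish the quasi-optimality bound by comparing the truncated HOSVD against the best rank-$(\tilde r_1,\tilde r_2,\tilde r_3)$ approximation through a telescoping argument that truncates one mode at a time. The key structural fact is that the truncated HOSVD of $\U$ is obtained by projecting $\U$ onto the span of the leading $\tilde r_n$ left singular vectors of each matricization $\bfU_{(n)}$; writing $\bfP_n$ for the orthogonal projector (acting in mode $n$) onto those $\tilde r_n$ singular vectors, we have $\U^\star = \U\times_1\bfP_1\times_2\bfP_2\times_3\bfP_3$. The first step is to record two elementary properties of these mode-$n$ projectors: each $\bfP_n$ is an orthogonal projection in the Frobenius inner product, hence $\norm{\U\times_n\bfP_n}\le\norm{\U}$ (projections are nonexpansive), and distinct-mode projectors commute, so the order in which they are applied is immaterial.

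Next I would split the total error telescopically. Writing $\bfI$ for the identity, I decompose
\begin{equation*}
\U-\U^\star = (\U - \U\times_1\bfP_1) + (\U\times_1\bfP_1 - \U\times_1\bfP_1\times_2\bfP_2) + (\U\times_1\bfP_1\times_2\bfP_2 - \U\times_1\bfP_1\times_2\bfP_2\times_3\bfP_3).
\end{equation*}
The three summands are mutually orthogonal in the Frobenius inner product because each introduces a factor of $(\bfI-\bfP_n)$ in a \emph{distinct} mode, and the ranges of $\bfP_n$ and $\bfI-\bfP_n$ are orthogonal complements; applying the Pythagorean identity gives $\norm{\U-\U^\star}^2 = \sum_{n=1}^{3}\norm{\U\times_n(\bfI-\bfP_n)}^2$, where in each term the already-applied projectors in the other modes are nonexpansive and so can only decrease the norm. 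Thus each summand is bounded by $\norm{\U\times_n(\bfI-\bfP_n)}^2$, which is exactly the sum of squared discarded singular values of $\bfU_{(n)}$, i.e. the squared error of the best rank-$\tilde r_n$ approximation of the matricization $\bfU_{(n)}$.

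The final step is to relate each mode-$n$ matricization error to the genuine tensor best-approximation error. For any $\X\in\mathcal{T}(\tilde r_1,\tilde r_2,\tilde r_3)$, the mode-$n$ matricization $\bfX_{(n)}$ has rank at most $\tilde r_n$, so by the Eckart--Young theorem the truncated-SVD error of $\bfU_{(n)}$ is no larger than $\norm{\bfU_{(n)}-\bfX_{(n)}} = \norm{\U-\X}$ (matricization is a Frobenius isometry). Taking the minimum over $\X$ on the right, each of the three mode errors is bounded by $\min_{\X}\norm{\U-\X}$, and summing the three squared contributions yields $\norm{\U-\U^\star}^2 \le 3\min_{\X}\norm{\U-\X}^2$, which gives the claimed factor of $\sqrt{3}$ after taking square roots.

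The step I expect to require the most care is justifying the orthogonality of the three telescoped summands and the consequent Pythagorean decomposition: one must verify that mode-$n$ projections commute across distinct modes and that inserting $(\bfI-\bfP_n)$ in one mode while retaining projectors in the others still yields terms that are pairwise Frobenius-orthogonal. Once that orthogonality is in place, the remaining estimates are direct applications of the nonexpansivity of projections and Eckart--Young, so the essential content of the proof is entirely concentrated in this decomposition.
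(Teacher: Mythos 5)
The paper does not reproduce a proof of this theorem---it is imported from the cited references \cite{de2000multilinear,kormann2017low}---but your argument is precisely the standard proof given there: write $\U^\star=\U\times_1\bfP_1\times_2\bfP_2\times_3\bfP_3$, telescope over the modes, use the pairwise Frobenius-orthogonality of the summands (each carries a factor $\mathbf{I}-\bfP_n$ in a distinct mode), drop the remaining nonexpansive projectors, and apply Eckart--Young to each matricization. The only blemish is that the displayed Pythagorean step should read $\norm{\U-\U^\star}^2\leq\sum_{n=1}^{3}\norm{\U\times_n(\mathbf{I}-\bfP_n)}^2$ rather than with equality---the exact equality holds for the telescoped summands themselves, and discarding the nonexpansive projectors in the other modes turns it into an inequality---which your very next sentence already makes clear, so the argument is sound as intended.
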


It's well understood that in the matrix case, the truncated SVD only conserves mass up to the truncation tolerance on the singular values. The same observation is usually seen in the tensor case. To address this issue, low-rank methods for kinetic simulations have employed strategies to truncate the numerical solution while conserving its macroscopic moments \cite{guo2024local,guo2024conservative}. We extend the Local Macroscopic Conservative (LoMaC) truncation strategy from \cite{guo2024local} to the Tucker tensor format. This procedure can truncate the solution stored in a Tucker decomposition while conserving mass, momentum, and/or energy. Depending on the model of interest, one can conserve any of these moments, if any at all. We discuss further details in the Appendix. To maintain brevity, we proceed in this paper using the non-conservative HOSVD truncation knowing that it can easily be swapped with a conservative truncation procedure.

\begin{rem}
We remind the reader that for order-$d$ tensors in higher dimensions, the storage complexity of the Tucker decomposition is $\mathcal{O}(r^d+dNr)$. Other low-rank tensor decompositions offer alternative ways to store the tensor solution, e.g., tensor train \cite{oseledets2011tensor}, and hierarchical Tucker/tree tensor networks \cite{hackbusch2009new, grasedyck2013literature}. In general, these other tensor decompositions are more advantageous for $d\geq 4$ since their storage complexities are often dominated by a $\mathcal{O}(r^3)$ term for low-rank tensors. Thus, in three dimensions, the storage complexity of the Tucker decomposition is comparable to that of these alternative decompositions, making it a suitable choice for the proposed framework.
\end{rem}

\begin{rem}
Many works in the low-rank differential equations literature use $\C$ to denote the core tensor, particularly when using tree tensor networks. To remain consistent with the notation used in \cite{kolda2009tensor} for the Tucker decomposition, we opt to use $\G$.
\end{rem}


\section{The 3d-RAIL scheme for Tucker tensor solutions}

We now introduce the proposed integrator for solving convection-diffusion equations. The diffusive terms will be evolved implicitly, while the convective terms will be handled explicitly, using implicit-explicit (IMEX) Runge-Kutta discretizations. Whereas most robust dynamical low-rank (DLR) and basis-update and Galerkin (BUG) integrators rely on time-continuous evolution equations for the low-rank factors, we take an alternative approach from \cite{nakao2025reduced} in which the original PDE of interest is instead \textit{fully} discretized in both space and time. At each Runge-Kutta stage, the fully discretized tensor equation for the original solution $\U$ is then updated in a BUG-type fashion in order to retrieve the low-rank factors.

\subsection{The semi-discrete formulation}

Discretizing the solution $u(x,y,z,t)$ over the tensor product of the uniform computational grids \eqref{eq: grid}, we assume that the numerical solution locally admits a time-dependent low-rank representation as a third-order tensor in the Tucker format,
\begin{equation}
	\U(t) = \G(t)\times_1\bfV_x(t)\times_2\bfV_y(t)\times_3\bfV_z(t),
\end{equation}
where the core tensor, factor matrices (i.e., one-dimensional bases), and multilinear rank $(r_x(t),r_y(t),r_z(t))$ are time-dependent. We first need to discretize in space, following which we will discretize in time. As such, we must discretize the flux and diffusive terms. The diffusive term, in a more general anisotropic form, is easily discretized with
\begin{align}
\big(d_1\partial_x^2+d_2\partial u_y^2+d_3\partial_y^2\big)u\qquad\longleftrightarrow\qquad
\begin{split}
	&\G(t)\times_1\bfF_x\bfV_x(t)\times_2\bfV_y(t)\times_3\bfV_z(t)\\
	&+ \G(t)\times_1\bfV_x(t)\times_2\bfF_y\bfV_y(t)\times_3\bfV_z(t)\\
	&\quad+ \G(t)\times_1\bfV_x(t)\times_2\bfV_y(t)\times_3\bfF_z\bfV_z(t),
\end{split}
\end{align}
where $\bfF_x$, $\bfF_y$, $\bfF_z$ respectively represent discretizations of the one-dimensional Laplacians $d_1\partial_x^2$, $d_2\partial_y^2$, $d_3\partial_z^2$. Although we assume the diffusion coefficients are constants, in general they could be time-dependent, and hence the differentiation matrices could also be time-dependent.

Next, we address the flux $\nabla\cdot\mathbf{F}(u)=f_1(u)_x + f_2(u)_y + f_3(u)_z.$ For simplicity, we assume that the flux in each direction is the scalar product of two functions linear in $u$. For instance, linear advection would have the form $a_i(\mathbf{x},t)u(\mathbf{x},t)$ for $i=1,2,3$. Whereas, Burgers' equation would have the form $u(\mathbf{x},t)u(\mathbf{x},t)/2$ in each direction, that is, $a_i=u/2$ for $i=1,2,3$. The transport (and source) terms need to be expressed as low-multilinear rank Tucker tensors in order to fit our projection based procedure and maintain computational efficiency. Under this assumption, the scalar flow field in each direction can be expressed as a low-multilinear rank Tucker tensor. That is, for $i=1,2,3$, the function $a_i(\mathbf{x},t)$ can be discretized as
\begin{equation}
	\A_i(t) = \G^a_i(t)\times_1\bfA_{i,x}(t)\times_2\bfA_{i,y}(t)\times_3\bfA_{i,z}(t).
\end{equation}

For $i=1,2,3$, we require a Tucker decomposition that stores the flux function $a_i(\mathbf{x},t)u(\mathbf{x},t)$ over the tensorized computational grid. Since $a_i(\mathbf{x},t)$ and $u(\mathbf{x},t)$ are each stored in Tucker tensors, we need the Tucker decomposition of the Hadamard (elementwise) product of two Tucker tensors, namely $\E_i(t)=\A_i(t)*\U(t)$. This is shown in \cite{lee2014fundamental}, resulting in the discretizations for the one-dimensional fluxes $f_1(u)$, $f_2(u)$, $f_3(u)$:
\begin{align}\label{eq: flowfield_tuckerformat}
\begin{split}
	\E_i(t) &=\G^e_i\times_1\bfE_{i,x}\times_2\bfE_{i,y}\times_3\bfE_{i,z}\\
	&\qquad\coloneqq (\G^a_i\otimes\G)\times_1(\bfA_{i,x}\odot^T\bfV_x)\times_2(\bfA_{i,y}\odot^T\bfV_y)\times_3(\bfA_{i,z}\odot^T\bfV_z),\qquad i=1,2,3,
\end{split}
\end{align}
where $\otimes$ denotes the Kronecker product for third-order tensors\footnote{This is a slight abuse of notation since the same symbol is used for the Kronecker product for matrices, but its meaning should be clear from context. Although the matrix Kronecker product is more commonly known, the tensor Kronecker product is less so. We refer the reader to \cite{ragnarsson2012structured} for a thorough treatment of this topic and structured tensor computations.}, and $\odot^T$ denotes the transpose Khatri-Rao product\footnote{\begin{equation*}
	\bfA\odot^T\bfB\coloneqq \big(\bfA^T\odot\bfB^T\big)^T = 
\Big[\mathbf{a}_{1,:}^T\otimes\mathbf{b}_{1,:}^T,\hdots,\mathbf{a}_{N,:}^T\otimes\mathbf{b}_{N,:}^T\Big]^T
=
\begin{bmatrix*}
		(\mathbf{a}_{1,:}^T\otimes\mathbf{b}_{1,:}^T)^T\\
		\hdots\\
		(\mathbf{a}_{N,:}^T\otimes\mathbf{b}_{N,:}^T)^T
\end{bmatrix*}
=
\begin{bmatrix*}
			\mathbf{a}_{1,:}\otimes\mathbf{b}_{1,:}\\
			\hdots\\
			\mathbf{a}_{N,:}\otimes\mathbf{b}_{N,:}
\end{bmatrix*}.
\end{equation*}}. As such, we get the spatial discretization of the flux term,
\begin{align}
\nabla\cdot\mathbf{F}(u)\qquad\longleftrightarrow\qquad
\begin{split}
	&\G^e_1(t)\times_1\bfD_x\bfE_{1,x}(t)\times_2\bfE_{1,y}(t)\times_3\bfE_{1,z}(t)\\
	&+ \G^e_2(t)\times_1\bfE_{2,x}(t)\times_2\bfD_y\bfE_{2,y}(t)\times_3\bfE_{2,z}(t)\\
	&\quad+ \G^e_3(t)\times_1\bfE_{3,x}(t)\times_2\bfE_{3,y}(t)\times_3\bfD_z\bfE_{3,z}(t)
\end{split}
\end{align}
where $\bfD_x$, $\bfD_y$, $\bfD_z$ respectively discretize the one-dimensional partial derivatives $\partial_x$, $\partial_y$, $\partial_z$. We note that the multilinear rank of $\E_i$ is $(r^2,r^2,r^2)$ assuming the same rank $r$ for the sake of analyzing the storage complexity. If the multilinear rank of either $\A_i$ or $\U$ is too large, this can quickly become expensive and might require further compression to maintain efficiency; such algorithms are summarized in \cite{kressner2017recompression}. Other works represent nonlinear terms using similar low-rank representations \cite{yang2025second}.

Along with the diffusive and convective terms, we also assume that the source term $c(\mathbf{x},t)$ can be represented by a low-multilinear rank Tucker tensor,
\begin{equation}
\C(t)=\G^c(t)\times_1\bfC_x(t)\times_2\bfC_y(t)\times_3\bfC_z(t).
\end{equation}

Discretizing equation \eqref{eq: conv_diff_eqn} in space according to the equations above yields the tensor differential equation

\begin{align}\label{eq: advdiff_semidiscrete}
\begin{split}
	\frac{d}{dt}\Big(\G(t)\times_1\bfV_x(t)\times_2\bfV_y(t)\times_3\bfV_z(t)\Big) &= \Big\{\G(t)\times_1\bfF_x\bfV_x(t)\times_2\bfV_y(t)\times_3\bfV_z(t)\\
	&\quad+ \G(t)\times_1\bfV_x(t)\times_2\bfF_y\bfV_y(t)\times_3\bfV_z(t)\\
	&\quad+ \G(t)\times_1\bfV_x(t)\times_2\bfV_y(t)\times_3\bfF_z\bfV_z(t)\Big\}\\
	&- \Big\{\G^e_1(t)\times_1\bfD_x\bfE_{1,x}(t)\times_2\bfE_{1,y}(t)\times_3\bfE_{1,z}(t)\\
	&\quad+ \G^e_2(t)\times_1\bfE_{2,x}(t)\times_2\bfD_y\bfE_{2,y}(t)\times_3\bfE_{2,z}(t)\\
	&\quad+ \G^e_3(t)\times_1\bfE_{3,x}(t)\times_2\bfE_{3,y}(t)\times_3\bfD_z\bfE_{3,z}(t)\Big\}\\
	&+ \G^c(t)\times_1\bfC_x(t)\times_2\bfC_y(t)\times_3\bfC_z(t).
\end{split}
\end{align}

\begin{rem}
In some cases, one might want to evolve both the transport and diffusive terms together. For instance, one might choose to cast an advection-diffusion equation in a conservative form, $u_t=\nabla\cdot(d\nabla u - \mathbf{a}u)$. In which case, $\mathbf{F}_x$, $\mathbf{F}_y$ and $\mathbf{F}_z$ could be discretizations for any appropriate finite difference scheme.
\end{rem}

\subsection{The first-order scheme using backward Euler-forward Euler discretization}\label{sec: firstorderscheme}

Discretizing the tensor differential equation \eqref{eq: advdiff_semidiscrete} using the implicit Euler-backward Euler method yields the fully discrete tensor equation
\begin{align}\label{eq: IMEX111fullydiscrete}
\begin{split}
	\G^{n+1}\times_1\bfV_x^{n+1}\times_2\bfV_y^{n+1}\times_3\bfV_z^{n+1} &= \G^{n}\times_1\bfV_x^{n}\times_2\bfV_y^{n}\times_3\bfV_z^{n}\\
	&+ \Delta t\Big\{\G^{n+1}\times_1\bfF_x\bfV_x^{n+1}\times_2\bfV_y^{n+1}\times_3\bfV_z^{n+1}\\
	&\quad\quad+ \G^{n+1}\times_1\bfV_x^{n+1}\times_2\bfF_y\bfV_y^{n+1}\times_3\bfV_z^{n+1}\\
	&\quad\quad+ \G^{n+1}\times_1\bfV_x^{n+1}\times_2\bfV_y^{n+1}\times_3\bfF_z\bfV_z^{n+1}\Big\}\\
	&- \Delta t\Big\{\G^{e,n}_1\times_1\bfD_x\bfE_{1,x}^{n}\times_2\bfE_{1,y}^{n}\times_3\bfE_{1,z}^{n}\\
	&\quad\quad+ \G^{e,n}_2\times_1\bfE_{2,x}^{n}\times_2\bfD_y\bfE_{2,y}^{n}\times_3\bfE_{2,z}^{n}\\
	&\quad\quad+ \G^{e,n}_3\times_1\bfE_{3,x}^{n}\times_2\bfE_{3,y}^{n}\times_3\bfD_z\bfE_{3,z}^{n}\Big\}\\
	&+ \Delta t\Big\{\G^{c,n+1}\times_1\bfC_x^{n+1}\times_2\bfC_y^{n+1}\times_3\bfC_z^{n+1}\Big\},
\end{split}
\end{align}
where $\U(t^{n+1}) \approx \U^{n+1} =  \G^{n+1}\times_1\bfV_x^{n+1}\times_2\bfV_y^{n+1}\times_3\bfV_z^{n+1}$. Here, we evolve the source term implicitly, although the source term can just as easily be evolved explicitly. Following the spirit of the DLR/BUG methods, the low-rank factors are updated in a projection-based fashion. The one-dimensional bases are first updated by freezing the bases in the other two dimensions, followed by a Galerkin projection in all dimensions to update the core tensor. However, unlike the DLR/BUG methods which evolve time-continuous differential equations for the low-rank factors, we instead project the fully discrete equation \eqref{eq: IMEX111fullydiscrete} onto low-dimensional subspaces spanned by some one-dimensional bases $\bfV_x^{\star,n+1}$, $\bfV_y^{\star,n+1}$, $\bfV_z^{\star,n+1}$. Since implicit integration is desired, the future bases are needed to perform the projection. Since the updated bases are initially unavailable, the bases denoted with a star $\star$ should provide good approximations to the exact bases at time $t^{n+1}$, namely, $\bfV_x(t^{n+1})$, $\bfV_y(t^{n+1})$ and $\bfV_z(t^{n+1})$. For the first-order scheme, we let $\bfV_x^{\star,n+1}\coloneqq\bfV_x^n$, $\bfV_y^{\star,n+1}\coloneqq\bfV_y^n$, and $\bfV_z^{\star,n+1}\coloneqq\bfV_z^n$.  The projected solutions in $(y,z)$, $(x,z)$ and $(x,y)$ are then respectively defined by
\begin{subequations}\label{eq: K_atanystage}
\begin{equation}\label{K1}
	\bfK_1^{n+1} \coloneqq \bfV_x^{n+1}\bfG_{(1)}^{n+1}\big((\bfV_z^{n+1})^T\bfV_z^{\star,n+1}\otimes(\bfV_y^{n+1})^T\bfV_y^{\star,n+1}\big),
\end{equation}
\begin{equation}\label{K2}
	\bfK_2^{n+1} \coloneqq \bfV_y^{n+1}\bfG_{(2)}^{n+1}\big((\bfV_z^{n+1})^T\bfV_z^{\star,n+1}\otimes(\bfV_x^{n+1})^T\bfV_x^{\star,n+1}\big),
\end{equation}
\begin{equation}\label{K3}
	\bfK_3^{n+1} \coloneqq \bfV_z^{n+1}\bfG_{(3)}^{n+1}\big((\bfV_y^{n+1})^T\bfV_y^{\star,n+1}\otimes(\bfV_x^{n+1})^T\bfV_x^{\star,n+1}\big),
\end{equation}
\end{subequations}
where each projected solution has been obtained by matricizing in the respective dimension. Since $\bfK_1^{n+1}$ has eliminated the dependence on $(y, z)$, it can be used to extract an updated orthonormal basis in $x$; the same applies to $\bfK_2^{n+1}$ and $\bfK_3^{n+1}$. Updating these projected solutions is commonly referred to as the K steps in the literature \cite{ceruti2022rank,ceruti2023rank}, in particular, the K$_1$, K$_2$ and K$_3$ steps. Without loss of generality, we detail only the K$_1$ step. Performing the mode-1 matricization on equation \eqref{eq: IMEX111fullydiscrete}, and then projecting the resulting equation in $(y,z)$ using $\bfV_y^{\star,n+1}=\bfV_y^n$ and $\bfV_z^{\star,n+1}=\bfV_z^n$, that is, multiplying on the right by $\bfV_z^n\otimes\bfV_y^n$, we have
\begin{align}\label{eq: K1_projected}
\begin{split}
	\bfV_x^{n+1}\bfG_{(1)}^{n+1}\big((\bfV_z^{n+1})^T\bfV_z^{n}\otimes(\bfV_y^{n+1})^T\bfV_y^{n}\big) &= \mathbf{W}_1^{n} + \Delta t\Big\{\bfF_x\bfV_x^{n+1}\bfG_{(1)}^{n+1}\big((\bfV_z^{n+1})^T\bfV_z^{n}\otimes(\bfV_y^{n+1})^T\bfV_y^{n}\big)\\
&\quad\quad\quad\quad\quad+ \bfV_x^{n+1}\bfG_{(1)}^{n+1}\big((\bfV_z^{n+1})^T\bfV_z^{n}\otimes(\bfF_y\bfV_y^{n+1})^T\bfV_y^{n}\big)\\
&\quad\quad\quad\quad\quad+ \bfV_x^{n+1}\bfG_{(1)}^{n+1}\big((\bfF_z\bfV_z^{n+1})^T\bfV_z^{n}\otimes(\bfV_y^{n+1})^T\bfV_y^{n}\big)\Big\}
\end{split}
\end{align}
where
\begin{align}
\begin{split}
	\mathbf{W}_1^{n} = \bfV_x^{n}\bfG_{(1)}^{n} &- \Delta t\Big\{(\bfD_x\bfE_{1,x}^n)\bfG_{1,(1)}^{e,n}\big((\bfE_{1,z}^n)^T\bfV_z^n\otimes(\bfE_{1,y}^n)^T\bfV_y^n\big)\\
	&\quad\quad+ \bfE_{2,x}^n\bfG_{2,(1)}^{e,n}\big((\bfE_{2,z}^n)^T\bfV_z^n\otimes(\bfD_y\bfE_{2,y}^n)^T\bfV_y^n\big)\\
	&\quad\quad+ \bfE_{3,x}^n\bfG_{3,(1)}^{e,n}\big((\bfD_z\bfE_{3,z}^n)^T\bfV_z^n\otimes(\bfE_{3,y}^n)^T\bfV_y^n\big)\Big\}\\
	&+ \Delta t\Big\{\bfC_x^{n+1}\bfG_{(1)}^{c,n+1}\big((\bfC_z^{n+1})^T\bfV_z^{n}\otimes(\bfC_y^{n+1})^T\bfV_y^{n}\big)\Big\}.
\end{split}
\end{align}

A standard approach to solve implicit equation \eqref{eq: K1_projected} is to cast it as a Sylvester equation for $\mathbf{K}_1^{n+1}$. However, to do so, we must further project the solution in the last two terms of equation \eqref{eq: K1_projected}. Although this projection introduces an additional error, it is on the same order as the projection error that comes from obtaining equation \eqref{eq: K1_projected}. Projecting the solution in the appropriate terms in equation \eqref{eq: K1_projected}, we get
\begin{equation}
	\mathbf{K}_1^{n+1} = \mathbf{W}_1^n + \Delta t\Big\{\bfF_x\mathbf{K}_1^{n+1} + \mathbf{K}_1^{n+1}\big(\mathbf{I}_{r_z^n\times r_z^n}\otimes(\bfF_y\bfV_y^{n})^T\bfV_y^{n}\big) + \mathbf{K}_1^{n+1}\big((\bfF_z\bfV_z^{n})^T\bfV_z^{n}\otimes\mathbf{I}_{r_y^n\times r_y^n}\big)\Big\},
\end{equation}
which can be expressed as the Sylvester equation
\begin{equation}\label{eq: K1}
	\Big(\mathbf{I}_{N_x\times N_x} - \Delta t\bfF_x\Big)\mathbf{K}_1^{n+1} - \mathbf{K}_1^{n+1}\Big(\Delta t(\bfF_z\bfV_z^{n})^T\bfV_z^{n}\oplus(\bfF_y\bfV_y^{n})^T\bfV_y^{n}\Big) = \mathbf{W}_1^{n},
\end{equation}
where $\oplus$ denotes the Kronecker sum\footnote{The Kronecker sum of two square matrices $\bfA\in\mathbb{R}^{P\times P}$ and $\bfB\in\mathbb{R}^{Q\times Q}$, denoted $\bfA\oplus\bfB\in\mathbb{R}^{PQ\times PQ}$, is the matrix $\bfA\otimes\mathbf{I}_{Q\times Q}+\mathbf{I}_{P\times P}\otimes\bfB$, where $\mathbf{I}_{P\times P}$ and $\mathbf{I}_{Q\times Q}$ are the identity matrices of sizes $P\times P$ and $Q\times Q$, respectively. Note that this is different from the direct sum of matrices which uses the same notation $\oplus$.}. We refer to equation \eqref{eq: K1} as the K$_1$ equation. Similarly, the K$_2$ and K$_3$ equations lead to the Sylvester equations
\begin{equation}\label{eq: K2}
	\Big(\mathbf{I}_{N_y\times N_y} - \Delta t\bfF_y\Big)\mathbf{K}_2^{n+1} - \mathbf{K}_2^{n+1}\Big(\Delta t(\bfF_z\bfV_z^{n})^T\bfV_z^{n}\oplus(\bfF_x\bfV_x^{n})^T\bfV_x^{n}\Big) = \mathbf{W}_2^{n},
\end{equation}
\begin{equation}\label{eq: K3}
	\Big(\mathbf{I}_{N_z\times N_z} - \Delta t\bfF_z\Big)\mathbf{K}_3^{n+1} - \mathbf{K}_3^{n+1}\Big(\Delta t(\bfF_y\bfV_y^{n})^T\bfV_y^{n}\oplus(\bfF_x\bfV_x^{n})^T\bfV_x^{n}\Big) = \mathbf{W}_3^{n}.
\end{equation}

Thus, the K steps reduce to solving three matrix Sylvester equations. Note that the K$_1$, K$_2$ and K$_3$ equations can also be solved in parallel, which accelerates computation without impacting the overall computational cost. Referring back to how $\mathbf{K}_1^{n+1}$ was defined in \eqref{K1}, the updated orthonormal basis $\bfV_x^{n+1}$ can be extracted by a reduced QR factorization, $\mathbf{K}_1^{n+1}=\mathbf{QR}\eqqcolon\bfV_x^{\ddagger,n+1}\mathbf{R}$. We toss the upper triangular matrix $\mathbf{R}$ since we only need an orthonormal basis. Similarly, we can extract the updated orthonormal bases $\bfV_y^{\ddagger,n+1}$ and $\bfV_z^{\ddagger,n+1}$ from $\mathbf{K}_2^{n+1}$ and $\mathbf{K}_3^{n+1}$, respectively. Here, the double dagger $\ddagger$ denotes the one-dimensional orthonormal bases obtained from the K steps that approximate the solution basis at $t^{n+1}$.

Following the reduced augmentation procedure from \cite{nakao2025reduced}, we propose augmenting $\bfV^{\ddagger,n+1}$ with the previous basis $\bfV^{n}$. Alternatively, we could augment $\bfV^{\ddagger,n+1}$ with $\mathbf{K}^{n+1}$, as was proposed in the original augmented BUG integrator \cite{ceruti2022rank}. Augmenting the updated and current bases together is advantageous because it ensures that the spanning subspace contains information over the entire time interval $[t^n,t^{n+1}]$. This is particularly important when the solution is rapidly evolving, for instance, over short times when solving the diffusion equation.

Unfortunately, in three dimensions this augmentation risks significantly increasing the rank since $\mathbf{K}^{n+1}$ is size $\sim N\times r^2$, and so the augmented basis is size $\sim N\times (r^2+r)$. Even for small ranks, this could quickly become costly. To remedy this issue, we reduce (truncate) the augmented basis according to a very small tolerance, usually $10^{-12}$. This tolerance is large enough to maintain a low rank, but small enough to not affect the consistency of the scheme. Although a larger tolerance could be used without affecting the overall accuracy, we risk removing basis vectors that carry physically relevant information important for the upcoming Galerkin projection. In our experience, there are often many redundant basis vectors, and this small tolerance of $10^{-12}$ does a very good job at reducing the basis. Computing the reduced QR factorizations of the augmented bases,
\begin{equation}\label{eq: redaug_order1}
	\Big[\bfV_x^{\ddagger,n+1},\bfV_x^{n}\Big] = \mathbf{Q}_x\mathbf{R}_x,\qquad\Big[\bfV_y^{\ddagger,n+1},\bfV_y^{n}\Big] = \mathbf{Q}_y\mathbf{R}_y,\qquad\Big[\bfV_z^{\ddagger,n+1},\bfV_z^{n}\Big] = \mathbf{Q}_z\mathbf{R}_z.
\end{equation}

Let $\hat{r}^{n+1}$ be the maximum of the number of singular values of $\mathbf{R}_x$, $\mathbf{R}_y$ and $\mathbf{R}_z$ larger than $10^{-12}$; we also enforce that $\hat{r}^{n+1}$ be no larger than the lengths of $\mathbf{R}_x$, $\mathbf{R}_y$ and $\mathbf{R}_z$. We then let the reduced augmented basis $\hat{\bfV}_x^{n+1}$ be $\mathbf{Q}_x$ multiplied on the right by the first $\hat{r}^{n+1}$ left singular vectors of $\mathbf{R}_x$, and similarly to obtain $\hat{\bfV}_y^{n+1}$ and $\hat{\bfV}_z^{n+1}$. With that, we can now perform a Galerkin projection onto the subspace generated by these (reduced) updated one-dimensional bases to update the core tensor $\hat{\G}^{n+1}$. This Galerkin projection is called the G step\footnote{In other works \cite{ceruti2021time,ceruti2023rank}, this is referred to as the C step. We choose to call this the G step to remain consistent with the Tucker tensor notation used in \cite{kolda2009tensor}.}, analogous to the S step in the DLR/BUG literature for solving two-dimensional problems. Here, we have used hats to denote the factorized solution produced by the K$_1$-K$_2$-K$_3$-G procedure. Performing a Galerkin projection onto the updated bases obtained via reduced augmentation,
\begin{equation}
	\text{vec}(\hat{\G}^{n+1}) \coloneqq \big((\hat{\bfV}_z^{n+1})^T\bfV_z^{n+1}\otimes(\hat{\bfV}_y^{n+1})^T\bfV_y^{n+1}\otimes(\hat{\bfV}_x^{n+1})^T\bfV_x^{n+1}\big)\text{vec}(\G^{n+1}).
\end{equation}

Similar to the K steps, we vectorize equation \eqref{eq: IMEX111fullydiscrete}, and then project the resulting equation by multiplying on the left by $\big(\hat{\bfV}_z^{n+1}\otimes\hat{\bfV}_y^{n+1}\otimes\hat{\bfV}_x^{n+1}\big)^T$ to get the tensor linear equation
\begin{align}\label{eq: Gequation}
\begin{split}
	&\Bigg\{\Big(\mathbf{I}_{\hat{r}^{n+1}\times \hat{r}^{n+1}}-\Delta t(\hat{\bfV}_z^{n+1})^T(\bfF_z\hat{\bfV}_z^{n+1})\Big)\otimes\mathbf{I}_{\hat{r}^{n+1}\times \hat{r}^{n+1}}\otimes\mathbf{I}_{\hat{r}^{n+1}\times \hat{r}^{n+1}}\\
	&\quad\quad+ \mathbf{I}_{\hat{r}^{n+1}\times \hat{r}^{n+1}}\otimes\Big(-\Delta t(\hat{\bfV}_y^{n+1})^T(\bfF_y\hat{\bfV}_y^{n+1})\Big)\otimes\mathbf{I}_{\hat{r}^{n+1}\times \hat{r}^{n+1}}\\
	&\quad\quad\quad+ \mathbf{I}_{\hat{r}^{n+1}\times \hat{r}^{n+1}}\otimes\mathbf{I}_{\hat{r}^{n+1}\times \hat{r}^{n+1}}\otimes\Big(-\Delta t(\hat{\bfV}_x^{n+1})^T(\bfF_x\hat{\bfV}_x^{n+1})\Big)\Bigg\}\text{vec}(\hat{\G}^{n+1}) = \text{vec}(\B^{n}),
\end{split}
\end{align}
where
\begin{align}
\begin{split}
	\B^{n} &= \G^n\times_1(\hat{\bfV}_x^{n+1})^T\bfV_x^n\times_2(\hat{\bfV}_y^{n+1})^T\bfV_y^n\times_3(\hat{\bfV}_z^{n+1})^T\bfV_z^n\\
	&- \Delta t\Big\{\G^{e,n}_1\times_1(\hat{\bfV}_x^{n+1})^T(\bfD_x\bfE_{1,x}^{n})\times_2(\hat{\bfV}_y^{n+1})^T\bfE_{1,y}^{n}\times_3(\hat{\bfV}_z^{n+1})^T\bfE_{1,z}^{n}\\
	&\quad\quad+ \G^{e,n}_2\times_1(\hat{\bfV}_x^{n+1})^T\bfE_{2,x}^{n}\times_2(\hat{\bfV}_y^{n+1})^T(\bfD_y\bfE_{2,y}^{n})\times_3(\hat{\bfV}_z^{n+1})^T\bfE_{2,z}^{n}\\
	&\quad\quad+ \G^{e,n}_3\times_1(\hat{\bfV}_x^{n+1})^T\bfE_{3,x}^{n}\times_2(\hat{\bfV}_y^{n+1})^T\bfE_{3,y}^{n}\times_3(\hat{\bfV}_z^{n+1})^T(\bfD_z\bfE_{3,z}^{n})\Big\}\\
	&+ \Delta t\Big\{\G^{c,n+1}\times_1(\hat{\bfV}_x^{n+1})^T\bfC_x^{n+1}\times_2(\hat{\bfV}_y^{n+1})^T\bfC_y^{n+1}\times_3(\hat{\bfV}_z^{n+1})^T\bfC_z^{n+1}\Big\}.
\end{split}
\end{align}

Unlike the matrix case, the equation for the core tensor does not satisfy a Sylvester equation \cite{nakao2025reduced}. Instead, we must solve a third-order tensor linear equation with Kronecker structure. Naively solving equation \eqref{eq: Gequation} would be prohibitively expensive due to the $r^3\times r^3$ coefficient matrix, hence motivating a more efficient solver. There are many efficient iterative solvers that exploit Kronecker and low-rank structures of the iteration matrices \cite{kressner2015truncated, kurschner2016efficient}. However, the algorithm presented in \cite{simoncini2020numerical} offers an alternative method that is particularly ideal for our situation since it is a direct solver, straightforward to understand, and easy to implement. Moreover, it does not require the use of the coefficient matrix in Kronecker form. As mentioned in \cite{simoncini2020numerical}, this direct solver can serve as a workhorse for solving reduced equations that show up in projection based procedures for large and sparse third-order tensor equations, such as the proposed implicit integrator. We emphasize that this algorithm is specifically designed for solving third-order tensor linear equations. For higher-order tensor linear equations, iterative methods might be a more suitable choice. Moreover, we emphasize that while the algorithm in \cite{simoncini2020numerical} is presented for rank-1 righthand sides of the form $\B=\mathbf{b}_1\otimes\mathbf{b}_2\otimes\mathbf{b}_3$, it can be easily extended to general (low-rank) third-order tensors $\B$. We thus present this extension as Corollary \ref{cor: 3Dsolver} in the Appendix, where we provide its proof for completeness. 

After using the direct solver described in Corollary \ref{cor: 3Dsolver} to solve for $\hat{\G}^{n+1}$, we truncate the Tucker tensor solution by using the multilinear SVD (MLSVD) \cite{de2000multilinear}, also known as the higher order SVD (HOSVD), to compress the core tensor. We use the \texttt{mlsvd} function in the MATLAB toolbox Tensorlab \cite{vervliet2016tensorlab,vervliet2016tensorlab_confpaper} to compress $\hat{\G}^{n+1}$ according to tolerance $\varepsilon$, usually between $10^{-4}$ and $10^{-8}$; we acknowledge KU Leuven as the provider of the software. The result is a (smaller) Tucker tensor of multilinear rank $(r_x^{n+1},r_y^{n+1},r_z^{n+1})$ that approximates the core tensor,
\begin{equation}
	\hat{\G}^{n+1} \approx \G^{n+1}\times_1\bfG_x^{n+1}\times_2\bfG_y^{n+1}\times_3\bfG_z^{n+1},
\end{equation}
where $\bfG_x^{n+1}$, $\bfG_y^{n+1}$ and $\bfG_z^{n+1}$ have orthonormal column vectors, and $\G^{n+1}$ is defined to be the final updated core tensor. The final updated one-dimensional bases/factor matrices of the Tucker tensor solution are then $\bfV_x^{n+1}=\hat{\bfV}_x^{n+1}\bfG_x^{n+1}$, $\bfV_y^{n+1}=\hat{\bfV}_y^{n+1}\bfG_y^{n+1}$ and $\bfV_z^{n+1}=\hat{\bfV}_z^{n+1}\bfG_z^{n+1}$. 

As mentioned in Section \ref{sec: tuckerdecomp}, the MLSVD/HOSVD procedure only conserves mass up to the truncation tolerance $\varepsilon$. We present the Local Macroscopic Conservative (LoMaC) truncation procedure from \cite{guo2024local} -extended to third-order Tucker tensors- in the Appendix. This conservative truncation procedure can conserve mass, momentum, and/or energy, depending on what the user desires based on the model being solved.

\begin{rem}
For practitioners familiar with BUG-type algorithms, it is important to note a key structural difference in the choice of projection spaces during the intermediate Runge-Kutta stages. While BUG (in higher-dimensional tree tensor formats \cite{ceruti2022rank,ceruti2021time,ceruti2023rank}) fixes both the factor matrices and the core tensor along inactive modes at each internal K-stage, 3d-RAIL allows the core tensor to remain unfrozen. This design choice increases the rank of the matrix unfolding involved in the projection step (from $r$ to $r^2$), but enables a dynamic evolution of the co-range over time. This flexibility aligns with the design principles behind the original 2d-RAIL scheme and may allow the algorithm to achieve a more compact representation over time compared to the fixed subspace used in BUG. As a result of this structural difference, 3d-RAIL remains consistent with BUG-based approximations but constitutes a distinct algorithm tailored for Tucker tensor formats. While this distinction plays no decisive role when using a first-order implicit method, it becomes critical when aiming to achieve higher-order accuracy.
\end{rem}

\subsection{The high-order scheme using implicit-explicit Runge-Kutta (IMEX RK) discretizations}

The advantage of the RAIL formulation that distinguishes it from the BUG methods is better seen in the high-order method. We begin by full discretizing the original PDE, that is, discretizing equation \eqref{eq: advdiff_semidiscrete} using a high-order IMEX RK scheme. Since we have completely discretized in time, the K1-K2-K3-G procedure can be applied at \textit{each} stage of the Runge-Kutta method, projecting onto richer subspaces at each subsequent stage. In this sense, the projection is local to each stage and the projection subspace is stage-dependent, tightly linked to the local time approximation.

\begin{table}[h!]
\begin{minipage}[b]{0.49\linewidth}
\centering
\caption{Implicit RK Scheme}
\label{table:IMEX_implicit}
\begin{tabular}{c|lllll}
    0&0&0&0&0&0\\
    $c_1$&0&$a_{11}$&0&$\hdots$&0\\
    $c_2$&0&$a_{21}$&$a_{22}$&$\hdots$&0\\
    $\vdots$&$\vdots$&$\vdots$&$\vdots$&$\ddots$&$\vdots$\\
    $c_s$&0&$a_{s1}$&$a_{s2}$&$\hdots$&$a_{ss}$\\
    \hline
    &0&$b_1$&$b_2$&$\hdots$&$b_s$
    \end{tabular}
\end{minipage}
\begin{minipage}[b]{0.49\linewidth}
\centering
\caption{Explicit RK Scheme}
\label{table:IMEX_explicit}
\begin{tabular}{c|lllll}
    0&0&0&0&$\hdots$&0\\
    $c_1$&$\tilde{a}_{21}$&0&0&$\hdots$&0\\
    $c_2$&$\tilde{a}_{31}$&$\tilde{a}_{32}$&0&$\hdots$&0\\
    $\vdots$&$\vdots$&$\vdots$&$\vdots$&$\ddots$&$\vdots$\\
    $c_s$&$\tilde{a}_{\sigma 1}$&$\tilde{a}_{\sigma 2}$&$\tilde{a}_{\sigma 3}$&$\hdots$&0\\
    \hline
    &$\tilde{b}_1$&$\tilde{b}_2$&$\tilde{b}_3$&$\hdots$&$\tilde{b}_{\sigma}$
    \end{tabular}
\end{minipage}
\end{table}

Following the notation from \cite{Ascher1997}, IMEX($s$,$\sigma$,$p$) couples an $s$ stage DIRK scheme with a $\sigma=s+1$ stage explicit RK scheme, with combined order $p$. IMEX($s$,$\sigma$,$p$) RK methods are expressed by two Butcher tableaus, one each for the implicit and explicit RK methods, as seen in Tables \ref{table:IMEX_implicit} and \ref{table:IMEX_explicit}. Discretizing the tensor differential equation \eqref{eq: advdiff_semidiscrete} in time, the equation for the intermediate solution $\U^{(k)}\approx\U(t^{(k)}=t^n+c_k\Delta t)$ at the $k$th stage is (for $k=1,2,...,s$)
\begin{align}\label{eq: IMEX_k}
\begin{split}
	\G^{(k)}\times_1\bfV_x^{(k)}\times_2\bfV_y^{(k)}\times_3\bfV_z^{(k)} &= \G^{n}\times_1\bfV_x^{n}\times_2\bfV_y^{n}\times_3\bfV_z^{n}\\
	&+ \Delta t\sum\limits_{\ell=1}^{k}a_{k\ell}\Big\{\G^{(\ell)}\times_1\bfF_x\bfV_x^{(\ell)}\times_2\bfV_y^{(\ell)}\times_3\bfV_z^{(\ell)}\\
	&\quad\quad\quad\quad\quad+ \G^{(\ell)}\times_1\bfV_x^{(\ell)}\times_2\bfF_y\bfV_y^{(\ell)}\times_3\bfV_z^{(\ell)}\\
	&\quad\quad\quad\quad\quad+ \G^{(\ell)}\times_1\bfV_x^{(\ell)}\times_2\bfV_y^{(\ell)}\times_3\bfF_z\bfV_z^{(\ell)}\Big\}\\
	&- \Delta t\sum\limits_{\ell=1}^{k}\tilde{a}_{k+1,\ell}\Big\{\G^{e,(\ell-1)}_1\times_1\bfD_x\bfE_{1,x}^{(\ell-1)}\times_2\bfE_{1,y}^{(\ell-1)}\times_3\bfE_{1,z}^{(\ell-1)}\\
	&\quad\quad\quad\quad\quad\quad+ \G^{e,(\ell-1)}_2\times_1\bfE_{2,x}^{(\ell-1)}\times_2\bfD_y\bfE_{2,y}^{(\ell-1)}\times_3\bfE_{2,z}^{(\ell-1)}\\
	&\quad\quad\quad\quad\quad\quad+ \G^{e,(\ell-1)}_3\times_1\bfE_{3,x}^{(\ell-1)}\times_2\bfE_{3,y}^{(\ell-1)}\times_3\bfD_z\bfE_{3,z}^{(\ell-1)}\Big\}\\
	&+ \Delta t\sum\limits_{\ell=1}^{k}a_{k\ell}\Big\{\G^{c,(\ell)}\times_1\bfC_x^{(\ell)}\times_2\bfC_y^{(\ell)}\times_3\bfC_z^{(\ell)}\Big\}.
\end{split}
\end{align}

We restrict ourselves to stiffly accurate DIRK methods for which $c_s=1$ and $a_{sk}=b_k$ for $k=1,2,...,s$. As such, $\U^{(s)}=\U^{n+1}$ and we do not need to compute the final stage of the RK method. Non-stiffly accurate methods can also be used, although the final stage must be computed. The high-order RAIL scheme performs the K$_1$-K$_2$-K$_3$-G (and truncate) procedure using enriched projection subspaces defined by $\bfV_{\star}^{x,(k)}$, $\bfV_{\star}^{y,(k)}$ and $\bfV_{\star}^{z,(k)}$ at each subsequent stage. The question that remains is: how do we define the approximate bases $\bfV_{\star}^{x,(k)}$, $\bfV_{\star}^{y,(k)}$ and $\bfV_{\star}^{z,(k)}$ at each stage?

K$_1$-K$_2$-K$_3$ Steps. The first-order scheme only used the reduced augmentation procedure preceding the G step, see equation \eqref{eq: redaug_order1}. For the high-order scheme, we will use the reduced augmentation procedure before the K$_1$-K$_2$-K$_3$ steps (to construct the approximate bases) \textit{and} before the G step. Without loss of generality of the dimension, at the $k$-th RK stage, we reduce the augmented basis
\begin{equation}\label{eq: redaug_RK}
	\Big[\bfV_x^{\dagger,(k)},\bfV_x^{(k-1)},...,\bfV_x^{(1)},\bfV_x^{(0)}\Big],
\end{equation}
where $\bfV_x^{(0)}=\bfV_x^{n}$, and $\bfV_x^{\dagger,(k)}$ denotes a first-order prediction at time $t^{(k)}$ computed using the first-order RAIL scheme. Similar to equation \eqref{eq: redaug_order1}, a first-order prediction at the future time $t^{(k)}$ is included in the augmented basis so that the subspace contains information over the entire time interval $[t^n,t^{(k)}]$. Note that $\bfV_x^{\dagger,(k)}$ is only needed for $k>1$ since the first stage $k=1$ is itself a backward Euler-forward Euler discretization. After computing the reduced QR factorizations of the augmented bases, the projection bases $\bfV_x^{\star,(k)}$, $\bfV_y^{\star,(k)}$, $\bfV_z^{\star,(k)}$ are constructed in the same manner as in the first-order scheme, where the singular values of the upper triangular factors $\mathbf{R}_x$, $\mathbf{R}_y$, $\mathbf{R}_z$ are truncated according to a small tolerance $10^{-12}$.

The projected solutions in $(y,z)$, $(x,z)$ and $(x,y)$ are respectively defined by \eqref{K1}, \eqref{K2} and \eqref{K3}; naturally, a superscript $(k)$ can be used to denote the projected solutions at time $t^{(k)}$. Following the same detailed procedure outlined in subsection \ref{sec: firstorderscheme} for the K$_1$ step, but instead working with equation \eqref{eq: IMEX_k}, one obtains the Sylvester equation
\begin{equation}\label{eq: K1_highorder}
	\Big(\mathbf{I}_{N_x\times N_x} - a_{kk}\Delta t\bfF_x\Big)\mathbf{K}_1^{(k)} - \mathbf{K}_1^{(k)}\Big(a_{kk}\Delta t(\bfF_z\bfV_z^{\star,(k)})^T\bfV_z^{\star,(k)}\oplus(\bfF_y\bfV_y^{\star,(k)})^T\bfV_y^{\star,(k)}\Big) = \mathbf{W}_1^{(k-1)},
\end{equation}
where
\begin{align}\label{eq: IMEX_RAIL3D_Wk-1}
\begin{split}
	\mathbf{W}_1^{(k-1)} &= \bfV_x^{n}\bfG_{(1)}^{n}\big((\bfV_z^{n})^T\bfV_z^{\star,(k)}\otimes(\bfV_y^{n})^T\bfV_y^{\star,(k)}\big)\\
&+ \Delta t\sum\limits_{\ell=1}^{k-1}a_{k\ell}\Big\{(\bfF_x\bfV_x^{(\ell)})\bfG_{(1)}^{(\ell)}\big((\bfV_z^{(\ell)})^T\bfV_z^{\star,(k)}\otimes(\bfV_y^{(\ell)})^T\bfV_y^{\star,(k)}\big)\\
&\quad\quad\quad\qquad+ \bfV_x^{(\ell)}\bfG_{(1)}^{(\ell)}\big((\bfV_z^{(\ell)})^T\bfV_z^{\star,(k)}\otimes(\bfF_y\bfV_y^{(\ell)})^T\bfV_y^{\star,(k)}\big)\\
&\quad\quad\quad\qquad+ \bfV_x^{(\ell)}\bfG_{(1)}^{(\ell)}\big((\bfF_z\bfV_z^{(\ell)})^T\bfV_z^{\star,(k)}\otimes(\bfV_y^{(\ell)})^T\bfV_y^{\star,(k)}\big)\Big\}\\
&- \Delta t\sum\limits_{\ell=1}^{k}\tilde{a}_{k+1,\ell}\Big\{(\bfD_x\bfE_{1,x}^{(\ell-1)})\bfG_{1,(1)}^{e,(\ell-1)}\big((\bfE_{1,z}^{(\ell-1)})^T\bfV_z^{\star,(k)}\otimes(\bfE_{1,y}^{(\ell-1)})^T\bfV_y^{\star,(k)}\big)\\
&\quad\quad\quad\quad\quad\qquad+ \bfE_{2,x}^{(\ell-1)}\bfG_{2,(1)}^{e,(\ell-1)}\big((\bfE_{2,z}^{(\ell-1)})^T\bfV_z^{\star,(k)}\otimes(\bfD_y\bfE_{2,y}^{(\ell-1)})^T\bfV_y^{\star,(k)}\big)\\
&\quad\quad\quad\quad\quad\qquad+ \bfE_{3,x}^{(\ell-1)}\bfG_{3,(1)}^{e,(\ell-1)}\big((\bfD_z\bfE_{3,z}^{(\ell-1)})^T\bfV_z^{\star,(k)}\otimes(\bfE_{3,y}^{(\ell-1)})^T\bfV_y^{\star,(k)}\big)\Big\}\\
&+ \Delta t\sum\limits_{\ell=1}^{k}a_{k\ell}\Big\{\bfC_x^{(\ell)}\bfG_{(1)}^{c,(\ell)}\big((\bfC_z^{(\ell)})^T\bfV_z^{\star,(k)}\otimes(\bfC_y^{(\ell)})^T\bfV_y^{\star,(k)}\big)\Big\}.
\end{split}
\end{align}

The K$_2$ and K$_3$ Sylvester equations can be derived similarly. With $\mathbf{K}_1^{(k)}$, $\mathbf{K}_2^{(k)}$ and $\mathbf{K}_3^{(k)}$ computed, a reduced QR factorization extracts orthonormal bases for each dimension. That is, $\mathbf{K}_1^{(k)}=\mathbf{QR}\eqqcolon\bfV_x^{\ddagger,(k)}\mathbf{R}$, and similarly for $\bfV_y^{\ddagger,(k)}$ and $\bfV_z^{\ddagger,(k)}$. As with the first-order scheme, a double dagger $\ddagger$ denotes the one-dimensional orthonormal bases obtained from the K steps that approximate the solution basis at $t^{(k)}$.

\begin{algorithm}[t!]
	\caption{The 3d-RAIL algorithm for convection-diffusion equations using IMEX RK methods}
	\label{algo: highorderIMEX}
		{\bf Input:} $\bfV_x^n$, $\bfV_y^n$, $\bfV_z^n$, $\G^n$ and $(r_x^n,r_y^n,r_z^n)$\\
	    {\bf Output:} $\bfV_x^{n+1}$, $\bfV_y^{n+1}$, $\bfV_z^{n+1}$, $\G^{n+1}$ and $(r_x^{n+1},r_y^{n+1},r_z^{n+1})$\\
\textbf{for} each RK stage $k=1,2,...,s$ \textbf{do}
	\begin{algorithmic}[1]
	   \State Compute the predictions $\bfV_x^{\dagger,(k)}$, $\bfV_y^{\dagger,(k)}$ and $\bfV_z^{\dagger,(k)}$; not needed for first stage.
	  \State Construct the projection bases $\bfV_x^{\star,(k)}$, $\bfV_y^{\star,(k)}$ and $\bfV_z^{\star,(k)}$.
	\State \textbf{for} each dimension $i\in\{x,y,z\}$ \textbf{do}
	\Statex\quad Compute $\mathbf{W}_i^{(k-1)}$, e.g., \eqref{eq: IMEX_RAIL3D_Wk-1}.
	\Statex\quad Solve the K$_i$ equation, e.g., \eqref{eq: K1_highorder} for $\mathbf{K}_i^{(k)}$.
	\Statex\quad Compute and store the update basis $\bfV_i^{\ddagger,(k)}$.
        \State Construct the bases $\hat{\bfV}_x^{(k)}$, $\hat{\bfV}_y^{(k)}$ and $\hat{\bfV}_z^{(k)}$.
        \State Compute $\B^{(k-1)}$ \eqref{eq: IMEX_RAIL3D_Gk-1}, and solve the G equation \eqref{eq: G_highorder} for $\hat{\G}^{(k)}$.
	\State Truncate the solution using the truncated HOSVD/MLSVD (or LoMaC).
	\State Store $\bfV_x^{(k)}$, $\bfV_y^{(k)}$, $\bfV_z^{(k)}$ and $\G^{(k)}$.
	\end{algorithmic}
Store the final solution $\bfV_x^{n+1}$, $\bfV_y^{n+1}$, $\bfV_z^{n+1}$, $\G^{n+1}$ and $(r_x^{n+1},r_y^{n+1},r_z^{n+1})$.
\end{algorithm}

G Step. Replacing $\bfV^{\dagger,(k)}$ with $\bfV^{\ddagger,(k)}$ in the augmented matrices, e.g., equation \eqref{eq: redaug_RK} for $x$-dimension, we perform the reduced augmentation procedure to obtain the pre-truncated bases $\hat{\bfV}^{(k)}$. We again let hats denote the factorized solution produced by the K$_1$-K$_2$-K$_3$-G procedure. Performing a Galerkin projection onto these updated bases,
\begin{equation}
	\text{vec}(\hat{\G}^{(k)}) \coloneqq \big((\hat{\bfV}_z^{(k)})^T\bfV_z^{(k)}\otimes(\hat{\bfV}_y^{(k)})^T\bfV_y^{(k)}\otimes(\hat{\bfV}_x^{(k)})^T\bfV_x^{(k)}\big)\text{vec}(\G^{(k)}).
\end{equation}

Projecting equation \eqref{eq: IMEX_k} onto the updated bases $\hat{\bfV}^{(k)}$, we get the third-order tensor linear equation
\begin{align}\label{eq: G_highorder}
\begin{split}
	&\Bigg\{\Big(\mathbf{I}_{\hat{r}^{(k)}\times \hat{r}^{(k)}}-a_{kk}\Delta t(\hat{\bfV}_z^{(k)})^T(\bfF_z\hat{\bfV}_z^{(k)})\Big)\otimes\mathbf{I}_{\hat{r}^{(k)}\times \hat{r}^{(k)}}\otimes\mathbf{I}_{\hat{r}^{(k)}\times \hat{r}^{(k)}}\\
	&\quad\quad+ \mathbf{I}_{\hat{r}^{(k)}\times \hat{r}^{(k)}}\otimes\Big(-a_{kk}\Delta t(\hat{\bfV}_y^{(k)})^T(\bfF_y\hat{\bfV}_y^{(k)})\Big)\otimes\mathbf{I}_{\hat{r}^{(k)}\times \hat{r}^{(k)}}\\
	&\quad\quad\quad+ \mathbf{I}_{\hat{r}^{(k)}\times \hat{r}^{(k)}}\otimes\mathbf{I}_{\hat{r}^{(k)}\times \hat{r}^{(k)}}\otimes\Big(-a_{kk}\Delta t(\hat{\bfV}_x^{(k)})^T(\bfF_x\hat{\bfV}_x^{(k)})\Big)\Bigg\}\text{vec}(\hat{\G}^{(k)}) = \text{vec}(\B^{(k-1)}),
\end{split}
\end{align}
where
\begin{align}\label{eq: IMEX_RAIL3D_Gk-1}
\begin{split}
	\B^{(k-1)} &= \G^n\times_1(\hat{\bfV}_x^{(k)})^T\bfV_x^n\times_2(\hat{\bfV}_y^{(k)})^T\bfV_y^n\times_3(\hat{\bfV}_z^{(k)})^T\bfV_z^n\\
&+ \Delta t\sum\limits_{\ell=1}^{k-1}a_{k\ell}\Big\{\G^{(\ell)}\times_1(\hat{\bfV}_x^{(k)})^T(\bfF_x\bfV_x^{(\ell)})\times_2(\hat{\bfV}_y^{(k)})^T\bfV_y^{(\ell)}\times_3(\hat{\bfV}_z^{(k)})^T\bfV_z^{(\ell)}\\
&\quad\quad\quad\qquad+ \G^{(\ell)}\times_1(\hat{\bfV}_x^{(k)})^T\bfV_x^{(\ell)}\times_2(\hat{\bfV}_y^{(k)})^T(\bfF_y\bfV_y^{(\ell)})\times_3(\hat{\bfV}_z^{(k)})^T\bfV_z^{(\ell)}\\
&\quad\quad\quad\qquad+ \G^{(\ell)}\times_1(\hat{\bfV}_x^{(k)})^T\bfV_x^{(\ell)}\times_2(\hat{\bfV}_y^{(k)})^T\bfV_y^{(\ell)}\times_3(\hat{\bfV}_z^{(k)})^T(\bfF_z\bfV_z^{(\ell)})\Big\}\\
&- \Delta t\sum\limits_{\ell=1}^{k}\tilde{a}_{k+1,\ell}\Big\{\G^{e,(\ell-1)}_1\times_1(\hat{\bfV}_x^{(k)})^T(\bfD_x\bfE_{1,x}^{(\ell-1)})\times_2(\hat{\bfV}_y^{(k)})^T\bfE_{1,y}^{(\ell-1)}\times_3(\hat{\bfV}_z^{(k)})^T\bfE_{1,z}^{(\ell-1)}\\
&\quad\quad\quad\quad\qquad+ \G^{e,(\ell-1)}_2\times_1(\hat{\bfV}_x^{(k)})^T\bfE_{2,x}^{(\ell-1)}\times_2(\hat{\bfV}_y^{(k)})^T(\bfD_y\bfE_{2,y}^{(\ell-1)})\times_3(\hat{\bfV}_z^{(k)})^T\bfE_{2,z}^{(\ell-1)}\\
&\quad\quad\quad\quad\qquad+ \G^{e,(\ell-1)}_3\times_1(\hat{\bfV}_x^{(k)})^T\bfE_{3,x}^{(\ell-1)}\times_2(\hat{\bfV}_y^{(k)})^T\bfE_{3,y}^{(\ell-1)}\times_3(\hat{\bfV}_z^{(k)})^T(\bfD_z\bfE_{3,z}^{(\ell-1)})\Big\}\\
&+ \Delta t\Big\{\G^{c,(\ell)}\times_1(\hat{\bfV}_x^{(k)})^T\bfC_x^{(\ell)}\times_2(\hat{\bfV}_y^{(k)})^T\bfC_y^{(\ell)}\times_3(\hat{\bfV}_z^{(k)})^T\bfC_z^{(\ell)}\Big\}.
\end{split}
\end{align}

The direct solver described in Corollary \ref{cor: 3Dsolver} is used to solve equation \eqref{eq: G_highorder} for $\hat{\G}^{(k)}$. According to some tolerance $\varepsilon$, the HOSVD (or a conservative truncation procedure) is then used to approximate the core tensor by a compressed Tucker tensor of multilinear rank $(r_x^{(k)},r_y^{(k)},r_z^{(k)})$,
\begin{equation}
	\hat{\G}^{(k)} \approx \G^{(k)}\times_1\bfG_x^{(k)}\times_2\bfG_y^{(k)}\times_3\bfG_z^{(k)}.
\end{equation}

Thus, the final updated solution at stage $k$ is described by the core tensor $\G^{(k)}$, and the one-dimensional bases/factor matrices $\bfV_x^{(k)} = \hat{\bfV}_x^{(k)}\bfG_x^{(k)}$, $\bfV_y^{(k)} = \hat{\bfV}_y^{(k)}\bfG_y^{(k)}$ and $\bfV_z^{(k)} = \hat{\bfV}_z^{(k)}\bfG_z^{(k)}$. Repeating this process for each subsequent stage, we eventually obtain the final updated solution $\U^{n+1}=\U^{(s)}=\G^{(s)}\times_1\bfV_x^{(s)}\times_2\bfV_y^{(s)}\times_3\bfV_z^{(s)}$. The 3d-RAIL method is summarized in Algorithm \ref{algo: highorderIMEX}.

\begin{rem}
Since the RAIL framework starts with the full discretization, the user has some flexibility in choosing the spatial and temporal discretizations. Preliminary results for the 2d-RAIL method in \cite{nakao2023thesis} used a second-order backward differentiation formula (BDF). The 3d-RAIL method can be extended to high-order BDF methods. One would just need to initialize the first steps using a sufficiently high-order scheme.
\end{rem}

\subsection{Stability and consistency}

Here, we discuss the stability and consistency of the 3d-RAIL scheme. We briefly discuss the structural differences and similarities between the 3d-RAIL and (implicit) Tucker-BUG algorithms, and how the first-order schemes share a similar error bound. A stability analysis is also provided for the first-order 3d-RAIL scheme. The consistency of the high-order scheme is then discussed without rigorous derivation, but we note that high-order accuracy was observed in the numerical experiments conducted for this paper. To the knowledge of the authors, only up to second-order error bounds have been rigorously derived within the class of \textit{implicit} DLR methods \cite{ceruti2024robust,kusch2025second}. Deriving error bounds for the high-order 3d-RAIL method is outside the scope of the current paper and remains a topic of ongoing work.

It was shown in \cite{nakao2025reduced} that the first-order 2d-RAIL and implicit 2d-BUG methods share the same error bound up to the tolerance of the reduced augmentation ($10^{-12}$). While their behavior at first-order is closely related in the 3d Tucker setting, there are structural differences between the RAIL and BUG methodologies in higher dimensions that do not apply to their 2d counterparts. Recall that 3d-RAIL fully discretizes in both space and time, before performing the projection onto a lower-dimensional subspace. This construction is more flexible for implicit time integrators compared to Tucker-BUG, which first projects the differential equation, resulting in time-continuous differential equations for the low-rank factors. Since the projection comes first, followed by splitting in the Tucker-BUG setting, a lower-order splitting error is incurred. Furthermore, Tucker-BUG freezes the core tensor along inactive modes when updating each low-rank factor matrix, which incurs an additional error. Whereas, RAIL-3D allows all matricized modes to evolve simultaneously. This flexibility, however, comes at the cost of increased computational complexity, as the K-steps typically incur a rank growth from $r$ to $r^2$. This structural distinction, rooted in the philosophy of DLR \textit{tensor approximation} \cite[\S 2]{koch2010dynamical}, enables a more faithful representation of the evolving solution and avoids the order-reducing effects induced by the sequential SVD-based derivation used in BUG. Moreover, unlike the Tucker-BUG method, the projection subspaces used in the 3d-RAIL scheme can be constructed in a more flexible manner at each RK stage.

\subsubsection{The first-order scheme}

To assess the consistency of the 3d-RAIL method, we begin by considering its first-order implicit version using backward Euler. This setting provides a natural point of comparison with the Tucker-BUG scheme introduced in \cite{ceruti2022rank}. The consistency of Tucker-BUG has been rigorously analyzed in \cite{ceruti2022rank}, where local and global error bounds were established. For completeness, we start by recalling the main local consistency result of the Tucker-BUG algorithm.

\begin{thm}[\cite{ceruti2022rank}]\label{thm: errorbound}
	Let the right-hand side of \eqref{eq: conv_diff_eqn} be Lipschitz continuous and uniformly bounded, with $L$ the Lipschitz constant and $B$ the uniform bound. Furthermore, assume that in a neighborhood of the exact solution, the right-hand side remains within $\sigma$-distance from its projection onto the tangent space of the rank-$r$ manifold. Let $\U_{\rm (BUG)}^{1}$ be the approximation obtained from the first-order implicit Tucker-BUG algorithm after a single time step $\Delta t>0$, assuming exact initial data $\U^0$. Then, the local error satisfies
	\begin{equation}
		\norm{\U(t^1)-\U_{\rm (BUG)}^{1}}_F \leq \Delta t(c_1\sigma + c_2\Delta t) + c_3\varepsilon,
	\end{equation}
	where the constants $c_i$ depend on $L$, $B$, and an upper bound on $\Delta t$. Here, $\varepsilon$ denotes the truncation tolerance used to compress the core tensor in the HOSVD.
\end{thm}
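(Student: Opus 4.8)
The plan is to adapt the robust local-error analysis of the dynamical low-rank / BUG literature (in the spirit of Kieri--Lubich--Walach and Ceruti--Lubich) to the two new features present here: the three-mode Tucker manifold, and the implicit backward-Euler time stepping. Write $\mathcal{F}$ for the spatially discretized right-hand side of \eqref{eq: conv_diff_eqn} acting on third-order tensors, let $\mathcal{M}$ denote the manifold of Tucker tensors of the prescribed multilinear rank, and let $P(\U)$ be the orthogonal projection onto the tangent space $T_\U\mathcal{M}$. The standing hypotheses are that $\mathcal{F}$ is Lipschitz with constant $L$ and bounded by $B$, and that the modeling defect satisfies $\norm{\mathcal{F}(\U)-P(\U)\mathcal{F}(\U)}_F\le\sigma$ in a neighborhood of the exact trajectory. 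The overall strategy is to split the local error, via the triangle inequality, into a pure time-discretization contribution (producing $c_2\Delta t^2$), a tangent-space modeling contribution governed by $\sigma$ (producing $c_1\sigma\Delta t$), and an HOSVD compression contribution governed by $\varepsilon$ (producing $c_3\varepsilon$).

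First I would establish well-posedness of the single implicit step. For $\Delta t$ below a threshold depending on $L$, the backward-Euler fixed-point map $\U\mapsto\U^0+\Delta t\,\mathcal{F}(\U)$ is a contraction, so the mode-wise implicit sub-problems (analogous to the Sylvester equation \eqref{eq: K1}) and the implicit core-tensor solve (analogous to \eqref{eq: Gequation}) each admit unique solutions, and the associated full-rank backward-Euler iterate $\tilde\U^1$ obeys the standard first-order consistency estimate $\norm{\U(t^1)-\tilde\U^1}_F\le C\Delta t^2$, with $C\lesssim LB$ and depending on the $\Delta t$-threshold. This accounts for the $c_2\Delta t^2$ term.

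The heart of the argument is the comparison of $\tilde\U^1$ with the pre-truncation BUG iterate. The decisive structural fact is the \emph{range condition} enforced by the basis augmentation: because the updated basis is augmented with the previous factor matrix $\bfV^n$ (cf. the analogous reduced augmentation \eqref{eq: redaug_order1}, the $10^{-12}$ reduction tolerance being negligible against the remaining terms), one obtains $\mathrm{range}(\bfV^n)\subseteq\mathrm{range}(\hat{\bfV}^{n+1})$ in every mode. Consequently the mode-wise orthogonal projectors reproduce $\U^0$ exactly, yielding the \emph{exactness property}: were the exact flow to remain on $\mathcal{M}$, the scheme would reproduce it without loss. The deviation from this ideal is controlled solely by the tangent-space defect, since projecting the one-step update onto the augmented subspaces replaces $\mathcal{F}$ by $P(\U)\mathcal{F}$ up to a residual of size $\sigma$; because the implicit solve is a contraction, this residual is amplified only by a constant before being multiplied by $\Delta t$, and a short Gronwall/fixed-point estimate closes the comparison with the $c_1\sigma\Delta t$ contribution, $c_1$ depending on $L$, $B$, and the $\Delta t$-threshold.

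Finally, the HOSVD compression of the core tensor replaces the pre-truncation iterate by a Tucker tensor of the reduced multilinear rank; by Theorem \ref{thm: HOSVDbound} this costs at most $\sqrt{3}$ times the best rank-$(r_x,r_y,r_z)$ approximation error, which is bounded by the prescribed tolerance, and the factor $\sqrt{3}$ is absorbed into $c_3$, giving the $c_3\varepsilon$ term. Summing the three estimates yields the stated bound. I expect the main obstacle to be keeping the comparison step \emph{robust with respect to small singular values}: a naive estimate routed through the curvature of $\mathcal{M}$ would introduce a factor $1/\sigma_{\min}$ that blows up whenever the multilinear rank is overestimated. Avoiding this is precisely the role of the augmentation-based range condition, and the delicate point is carrying the resulting cancellation through all three Tucker modes simultaneously—rather than the two factors of the matrix case—so that the constants $c_1,c_2,c_3$ remain independent of the smallest singular values in each unfolding.
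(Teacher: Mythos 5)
The paper does not prove this theorem at all: it is imported verbatim from \cite{ceruti2022rank} (``For completeness, we start by recalling the main local consistency result of the Tucker-BUG algorithm''), so there is no in-paper proof to compare your attempt against. Judged against the argument actually carried out in that reference, your sketch reproduces the right skeleton: the three-way splitting of the local error into a time-discretization part ($c_2\Delta t^2$), a tangent-space modeling part ($c_1\sigma\Delta t$), and a truncation part ($c_3\varepsilon$ via the quasi-optimality of the HOSVD, Theorem \ref{thm: HOSVDbound}), together with the correct identification of the augmentation-induced range condition as the mechanism that keeps all constants independent of the smallest singular values of the unfoldings. That last point is indeed the entire raison d'\^etre of the (rank-adaptive/augmented) BUG analysis, and you are right that a naive curvature-based estimate would fail.

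Where your sketch is thinner than the real proof is precisely the middle step, which is where all the work lives. The exactness you invoke --- that augmenting with $\bfV^n$ lets the projectors reproduce $\U^0$ --- is necessary but not sufficient; the key lemma in \cite{ceruti2022rank} is a quantitative statement that the \emph{augmented} subspace in each mode $i$ captures the range of the mode-$i$ unfolding of the exact solution $\U(t^1)$ (not of $\U^0$) up to $\mathcal{O}(\Delta t(\sigma+\Delta t))$, because it contains both the old basis (capturing $\U^0$ exactly) and the K-step range (capturing the increment up to the defect $\sigma$ and the consistency error). One then bounds $\|(\mathbf{I}-\hat P_1\otimes\hat P_2\otimes\hat P_3)\U(t^1)\|_F$ by a sum of the three mode-wise projection errors before controlling the Galerkin step; this is where the dimension-dependent constant enters and where the three modes must be handled simultaneously, as you anticipate at the end. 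Your appeal to ``a short Gronwall/fixed-point estimate'' papers over this lemma rather than supplying it, and for the implicit variant one additionally needs the uniform boundedness of the resolvent-type solves under $\Delta t L<1$ to propagate the defect without amplification (you assert this but do not verify it for the Sylvester-structured K-steps). So: correct strategy, consistent with the cited source, but the decisive estimate is asserted rather than proved.
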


While the two algorithms adopt different design philosophies, particularly in how they update the low-rank factors, their behavior at first-order accuracy is closely related. Most notably, the K-steps in both schemes involve solving implicit linear systems to evolve each factor matrix. Despite the conceptual difference described earlier, the projection subspaces produced by both schemes \textit{at first-order} remain nearly equivalent due to the backward stability of the QR decomposition used in the basis construction. While the G-step in both methods employs the same type of core tensor reconstruction, the input factor matrices differ slightly between the two schemes, resulting in a discrepancy of order $\mathcal{O}(\Delta t^2)$ per time step. Since both algorithms target the same underlying dynamics and differ only in the construction of the updated low-rank factor matrices, the 3d-RAIL scheme reproduces the same leading-order behavior as Tucker-BUG. We remark that the $\mathcal{O}(10^{-12})$ error incurred by the reduced augmentation is within the scope of roundoff-level effects and does not impact the theoretical consistency of the scheme.

While consistency ensures that the scheme approximates the continuous problem correctly as the discretization parameters tend to zero, stability is crucial to guarantee that numerical errors do not grow uncontrollably over time. The following theorem establishes the stability of the first-order implicit 3d-RAIL scheme, ensuring controlled error growth under suitable assumptions.

\begin{thm}
	Assuming $\bfF_x$, $\bfF_y$, and $\bfF_z$ are symmetric and negative semi-definite, the first-order implicit RAIL-3D scheme before truncation is unconditionally stable, i.e.
	\[
	\| \hat \G^{n+1} \|_{\rm F} \leq \| \hat \G^{n} \|_{\rm F} \, .
	\] 
\end{thm}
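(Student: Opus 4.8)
The stability statement concerns the implicitly treated diffusion, so the plan is to analyze the homogeneous problem obtained by setting $\mathbf{F}\equiv 0$ and $c\equiv 0$ in \eqref{eq: conv_diff_eqn}; the explicitly treated convection and source contributions are governed by a separate CFL-type argument and do not enter the unconditional-stability estimate. Under this reduction the G step \eqref{eq: Gequation} reads $\mathcal{A}\,\text{vec}(\hat{\G}^{n+1}) = \text{vec}(\B^{n})$ with
\[
\mathcal{A} = \mathbf{I} - \Delta t\Big(\tilde{\bfF}_z\otimes\mathbf{I}\otimes\mathbf{I} + \mathbf{I}\otimes\tilde{\bfF}_y\otimes\mathbf{I} + \mathbf{I}\otimes\mathbf{I}\otimes\tilde{\bfF}_x\Big),\qquad \tilde{\bfF}_i \coloneqq (\hat{\bfV}_i^{n+1})^T\bfF_i\hat{\bfV}_i^{n+1},
\]
and $\B^{n} = \G^{n}\times_1(\hat{\bfV}_x^{n+1})^T\bfV_x^{n}\times_2(\hat{\bfV}_y^{n+1})^T\bfV_y^{n}\times_3(\hat{\bfV}_z^{n+1})^T\bfV_z^{n}$. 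Since $\|\text{vec}(\hat{\G}^{n+1})\|_2 = \|\hat{\G}^{n+1}\|_{\rm F}$, the whole estimate reduces to two contraction bounds: $\|\mathcal{A}^{-1}\|_2\leq 1$ and $\|\B^{n}\|_{\rm F}\leq\|\hat{\G}^{n}\|_{\rm F}$.

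For the first bound, I would use that each $\hat{\bfV}_i^{n+1}$ has orthonormal columns (it is the orthonormal factor produced by the reduced-augmentation QR), so the congruence $\tilde{\bfF}_i = (\hat{\bfV}_i^{n+1})^T\bfF_i\hat{\bfV}_i^{n+1}$ inherits symmetry and negative semi-definiteness from $\bfF_i$ through the Rayleigh-quotient characterization. The operator $\tilde{\bfF}_z\otimes\mathbf{I}\otimes\mathbf{I} + \mathbf{I}\otimes\tilde{\bfF}_y\otimes\mathbf{I} + \mathbf{I}\otimes\mathbf{I}\otimes\tilde{\bfF}_x$ is the Kronecker sum of the $\tilde{\bfF}_i$, whose eigenvalues are the sums $\mu_p + \nu_q + \lambda_r$ of the (nonpositive) eigenvalues of the three factors and are therefore nonpositive. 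Consequently $\mathcal{A}$ is symmetric with every eigenvalue of the form $1 - \Delta t(\mu_p+\nu_q+\lambda_r)\geq 1$, which gives $\|\mathcal{A}^{-1}\|_2\leq 1$ for every $\Delta t>0$, i.e.\ unconditionally.

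For the second bound, I would use that mode-$n$ multiplication acts as left multiplication of the mode-$n$ unfolding, so $\|\G\times_n\bfP\|_{\rm F} = \|\bfP\bfG_{(n)}\|_{\rm F}\leq\|\bfP\|_2\|\G\|_{\rm F}$. Each factor $(\hat{\bfV}_i^{n+1})^T\bfV_i^{n}$ is a product of two matrices with orthonormal columns and hence has spectral norm at most $1$; applying the inequality once per mode yields $\|\B^{n}\|_{\rm F}\leq\|\G^{n}\|_{\rm F}$. Finally, because the HOSVD/MLSVD compression $\hat{\G}^{n}\approx\G^{n}\times_1\bfG_x^{n}\times_2\bfG_y^{n}\times_3\bfG_z^{n}$ has orthonormal column factors, the truncation is norm non-increasing, $\|\G^{n}\|_{\rm F}\leq\|\hat{\G}^{n}\|_{\rm F}$ (with equality when truncation is omitted), and chaining the three inequalities delivers $\|\hat{\G}^{n+1}\|_{\rm F}\leq\|\mathcal{A}^{-1}\|_2\,\|\B^{n}\|_{\rm F}\leq\|\G^{n}\|_{\rm F}\leq\|\hat{\G}^{n}\|_{\rm F}$.

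The two norm bounds are routine; the step requiring care is verifying that the projected diffusion matrices $\tilde{\bfF}_i$ remain symmetric negative semi-definite and that their Kronecker sum preserves the nonpositive spectrum, since this is precisely what makes $\|\mathcal{A}^{-1}\|_2\leq 1$ hold unconditionally rather than under a step-size restriction. I expect the handling of the mismatched factor matrices in $\B^{n}$---i.e.\ recognizing $(\hat{\bfV}_i^{n+1})^T\bfV_i^{n}$ as a contraction rather than an identity---to be the main conceptual (if small) obstacle, as it is what distinguishes the Tucker estimate from the simpler matrix case.
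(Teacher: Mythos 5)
Your proof is correct and follows essentially the same route as the paper: both arguments reduce to the observation that the projected diffusion matrices $(\hat{\bfV}_i^{n+1})^T\bfF_i\hat{\bfV}_i^{n+1}$ remain symmetric negative semi-definite, so that the G-step coefficient operator has spectrum bounded below by $1$ --- the paper realizes this by explicitly diagonalizing the three projected matrices and reading off a componentwise amplification factor $\alpha_{ij}\le 1$, which is the same eigenvalue computation you phrase through the Kronecker sum of the $\tilde{\bfF}_i$. If anything, your handling of the right-hand side is the more careful one: the paper simply writes it as $\hat{\G}^{n}$, whereas you identify it as $\G^{n}\times_1(\hat{\bfV}_x^{n+1})^T\bfV_x^{n}\times_2(\hat{\bfV}_y^{n+1})^T\bfV_y^{n}\times_3(\hat{\bfV}_z^{n+1})^T\bfV_z^{n}$ and bound it via the contraction property of the mixed orthonormal factors, which closes the loop back to $\|\hat{\G}^{n}\|_{\rm F}$ explicitly.
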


\begin{proof}
	The proof extends the result originally provided in Theorem 2 of \cite{nakao2025reduced} from the matrix setting to the tensor setting. For clarity, matrix dimensions are omitted in the following. To simplify the notation, we introduce the symmetric matrices:
	\begin{align*}
		\bfA &:= \Delta t(\hat{\bfV}_z^{n+1})^T(\bfF_z\hat{\bfV}_z^{n+1}), \\
		\bfB &:= \Delta t(\hat{\bfV}_y^{n+1})^T(\bfF_y\hat{\bfV}_y^{n+1}), \\
		\bfC &:= \Delta t(\hat{\bfV}_x^{n+1})^T(\bfF_x\hat{\bfV}_x^{n+1}).
	\end{align*}
	The update of $\hat \G^{n+1}$ can be concisely expressed as the following tensor equation:
	\[
	\hat \G^{n+1} \times_1 (\mathbf{I} - \bfA) - \hat \G^{n+1} \times_2 \bfB - \hat \G^{n+1} \times_3 \bfC = \hat \G^{n} \, .
	\]
	Unfolding the above equation along the first mode, we obtain:
	\[
	(\mathbf{I} - \bfA) \hat \bfG^{n+1} - \hat \bfG^{n+1} (\mathbf{I} \otimes \bfB) - \hat \bfG^{n+1} (\bfC \otimes \mathbf{I}) = \hat \bfG^{n} \, .
	\]
	where the matrices \(\hat{\bfG}^{n+1}\) and \(\hat{\bfG}^{n}\) represent the matricization of the tensors \(\hat{\G}^{n+1}\) and \(\hat{\G}^{n}\) along the first mode, respectively. For simplicity, we omit the subscript indicating the matricization mode. Now, we diagonalize the matrices \(\bfA\), \(\bfB\), and \(\bfC\), which are symmetric and semi-definite; thus, they admit diagonal representations such as  
	\begin{align*}
		\bfD &:= \bfP^T \bfA \bfP,\\
		\bfE &:= \bfQ^T \bfB \bfQ,\\
		\bfF &:= \bfR^T \bfC \bfR,
	\end{align*} 
	where the factors $\bfP$, $\bfQ$, and $\bfR$ are orthonormal, e.g. $ \bfP \bfP^T = \bfP^T \bfP = \mathbf{I}$. Moreover, we have that 
	\[ (\bfQ \otimes \bfR) (\bfQ^T \otimes \bfR^T) = \mathbf{I} \otimes \mathbf{I} \, .\]
	We continue by diagonalizing the matrices appearing in the matrix equation. We begin by multiplying from the right with \(\bfP^T\) and \((\bfQ \otimes \bfR)\) on the left. This yields the intermediate matrix equation:  
	\[
	\bfP^T(\mathbf{I} - \bfA) \hat \bfG^{n+1} (\bfQ \otimes \bfR) - \bfP^T \hat \bfG^{n+1} (\mathbf{I} \otimes \bfB + \bfC \otimes \mathbf{I})(\bfQ \otimes \bfR) = \bfP^T \hat \bfG^{n} (\bfQ \otimes \bfR) \, .
	\]
	If we introduce the auxiliary variables 
	\[ 
	\widetilde \bfG^{n+1} :=  \bfP^T \hat \bfG^{n+1} (\bfQ \otimes \bfR), \quad
	\widetilde \bfG^{n} :=  \bfP^T \hat  \bfG^{n} (\bfQ \otimes \bfR) \, ,
	\]
	the matrix equation simplifies to
	\[
	\bfP^T(\mathbf{I} - \bfA) \bfP \widetilde \bfG^{n+1} - \widetilde \bfG^{n+1} (\bfQ^T \otimes \bfR^T) (\mathbf{I} \otimes \bfB + \bfC \otimes \mathbf{I})(\bfQ \otimes \bfR) = \widetilde \bfG^n \, .
	\]
	Thus, since most of the factors in the expression above can be diagonalized, we obtain  
	\[
	(\mathbf{I} - \bfD) \widetilde \bfG^{n+1} - \widetilde \bfG^{n+1} (\mathbf{I} \otimes \bfE + \bfF \otimes \mathbf{I}) = \widetilde \bfG^n \, .
	\]
	It follows that 
	\[
	\widetilde G^{n+1}_{ij} = \alpha_{ij} \widetilde G^{n}_{ij}, \quad \text{with} \quad 
	\alpha_{ij} = \frac{1}{1 - D_{ii} - E_{jj} - F_{jj}} \, .
	\]
	Since the eigenvalues are strictly negative, the amplification factor \(\alpha_{ij}\) is less than or equal to 1. Thus,  
	\[
	\|\hat  \G^{n+1} \|_{\rm F} = \| \hat \bfG^{n+1} \|_{\rm F} = \| \widetilde \bfG^{n+1} \|_{\rm F} \leq \| \widetilde \bfG^{n} \|_{\rm F} \, ,
	\]
	The conclusion follows by recalling that, analogously, $ \| \widetilde \bfG^{n} \|_{\rm F} = \| \hat \G^n \|_{\rm F}$.
\end{proof}

At a more abstract level, this stability result can be attributed to the structural design of the algorithm: the dynamics are first discretized in time using an implicit scheme, and only then is a low-rank approximation applied to the resulting intermediate stages. This ordering preserves the inherent stability properties of the underlying time integrator. Moreover, the structure-preserving nature of the BUG-type low-rank updates, on which the RAIL formulation is built, ensures that these favorable stability characteristics are inherited.

\subsubsection{Addressing high-order accuracy}
While first-order consistency of the 3d-RAIL scheme can be explored through comparisons with Tucker-BUG, achieving high-order accuracy requires a more careful evaluation. When using high-order RK methods, our approach builds on the idea of interpreting each RK stage as a retraction step onto the low-rank manifold.

Unlike the DLR formulation which evolves the low-rank factors continuously in time, the 3d-RAIL perspective changes since the dynamics are evolved discretely in time according to the RK method. As a result, the standard viewpoints upon which the theory is based on in the DLR literature does not directly apply here. Thus, a new retraction-based formulation becomes necessary to preserve accuracy and structure across multiple stages. For a diagonally implicit Runge–Kutta (DIRK) method, the $i$-th intermediate stage satisfies
\begin{equation}
	\mathcal{Y}^{(i)} = \mathcal{U}^n + \Delta t \sum_{j=1}^{i} a_{ij} F\left( \mathcal{U}^{(j)} \right),
\end{equation}
where $\mathcal{Y}^{(i)}$ denotes the intermediate value that would be computed in a classical implementation, and $\mathcal{U}^{(j)}$ represents its low-rank surrogate at stage $j$. In the 3d-RAIL scheme, this is approximated via projection onto a tangent space:
\begin{equation}
	\mathcal{Y}^{(i)}  
	\approx  \rm P_{\mathcal{T}_{\widehat{\mathcal{U}}^{(i-1)}}} \mathcal{Y}^{(i)} \, .
\end{equation}
Here, $\widehat{\mathcal{U}}^{(i-1)}$ is an augmented representation used to define the tangent space, typically constructed by enriching the previous low-rank states $\mathcal{U}^{(i-1)}$ with a first-order prediction of the next stage. This construction effectively discards the normal component of the update, an approximation justified by a standard assumption in the DLR literature: for sufficiently small time-steps, the normal component remains negligible. As a result, a retraction can be applied directly to the projected candidate to map it back to the low-rank manifold. Thus, in the 3d-RAIL framework the update is interpreted geometrically as
\begin{equation}
	\mathcal{U}^{(i)} = \mathcal{R} \left( \rm P_{\mathcal{T}_{\widehat{\mathcal{U}}^{(i-1)}}} \mathcal{Y}^{(i)} \right) \, .
\end{equation}
This strategy is similar in spirit to the projection-based schemes proposed for time-dependent low-rank matrices in~\cite{kieri2019projection}, where the tangent space is dynamically adapted using \emph{explicit} RK methods. In the matrix setting, it has been recently shown that BUG-type updates can be interpreted as high-order retractions compatible with the underlying RK structure~\cite{seguin2024low}, ensuring that the correct order of accuracy is preserved at each intermediate stage. This result suggests a theoretical justification for the use of BUG-style procedures within multistage integrators. While a rigorous analysis of BUG-type retractions in the Tucker tensor setting remains an open challenge, the present 3d-RAIL construction is designed to preserve the same principles. Numerical evidence suggests that the 3d-RAIL scheme successfully maintains high-order accuracy when combined with implicit RK methods. This favorable behavior is largely due to the structural design of the algorithm, particularly the use of reduced augmentation, which enriches the projection subspaces at each stage by incorporating both previous \textit{and} predicted information.

\subsection{Computational complexity}
We briefly comment on the computational complexity of the 3d-RAIL algorithm. The proposed algorithm offers an efficient way to solve three-dimensional diffusion and convection-diffusion equations that exhibit low-rank solutions. However, we highlight the steps in the algorithm that will observe increased cost if the rank $r$ starts to grow too large. Each augmented matrix \eqref{eq: redaug_RK} that needs to be reduced is size $N\times r(k+1)$, where $k=1,2,...,s$ is the stage of the RK method. Hence, the computational cost of the reduced augmentations will increase with higher-order RK methods as more stages are required.

The dominant computational cost of the K steps comes from solving matrix Sylvester equations. Without loss of generality, the coefficient matrices involved in the K$_1$ equation are $\mathbf{I}-a_{kk}\Delta t\mathbf{F}_x$ of size $N\times N$ and $a_{kk}\Delta t(\bfF_z\bfV_z^{\star,(k)})^T\bfV_z^{\star,(k)}\oplus(\bfF_y\bfV_y^{\star,(k)})^T\bfV_y^{\star,(k)}$ of size $r^2\times r^2$. As such, most standard Sylvester solvers, e.g., \cite{bartels1972,golub1979hessenberg}, will have a computational cost dominated by $\sim N^3$ flops, assuming $r\ll N$. One could diagonalize this coefficient matrix and solve transformed Sylvester equations, or exploit potential sparsity of $\mathbf{F}_x$ using an iterative scheme to reduce this computation. However, if the rank $r$ starts significantly increasing, the other flop counts on the order of $\sim N^2r^2$, $\sim Nr^4$ and $\sim r^6$ will noticeably affect the computational cost. Assuming $r\ll N$, it's also worth noting that the reduced QR factorization of $\mathbf{K}_1$ will cost $\sim Nr^4$ flops.

Similarly, the direct solver used in the G step boils down to solving several Sylvester equations \eqref{eq: SimonciniSolver_sylveqn}. Since we use the direct solver to solve the G equation \eqref{eq: G_highorder} in which all the coefficient matrices are of size $r\times r$, the computational cost for solving each Sylvester equation will be dominated by $\sim r^3$ flops. Doing this $r$ times for the direct solver will result in the G step costing $\sim r^4$ flops.

Recall that the rank of the flow field can increase the cost, as seen in equation \eqref{eq: flowfield_tuckerformat}. Assuming the (multilinear) rank of the flow field is $r'$ and the (multilinear) rank of the solution is $r$, the size of the resulting core tensor $\G^e_i$ is $rr'\times rr'\times rr'$; the mode-$n$ matricization is size $rr'\times (rr')^2$. As a result, the computational cost of subsequent matrix multiplications greatly increases if the rank of the flow field is relatively large ($r^2\geq N$). If more general low-rank flow fields are considered, then further compression techniques might be needed for the resulting Tucker decomposition in order to maintain efficiency; see \cite{kressner2017recompression}. In our numerical tests, the rank of the solution appears to be independent of the mesh, meaning that the computational savings of the proposed method on low-rank problems becomes relatively better with finer meshes.

\begin{rem}
The non-conservative HOSVD conserves mass up to the truncation tolerance, but higher moments, especially the second moment for energy, are usually not well conserved. While the conservative truncation procedure described in the Appendix has the same computational complexity as the non-conservative HOSVD truncation, the overall cost is greater because the moments must be computed, and two HOSVD truncations must be performed. In that sense, there is a tradeoff between runtime and conservation. In the numerical results that we present in this paper, if a conservative truncation was used, it was used to truncate at \textit{each} stage. Alternatively, one could use the cheaper non-conservative HOSVD at all stages except the final one, at which point a conservative truncation procedure could be used to correct the conservation of the moments. We opted for the prior since we didn't want the accumulated conservation loss to grow too much over many stages, although we observed nearly identical results using either choice.
\end{rem}

\begin{rem}\label{rem: lomac_increasedrank}
In principle, the LoMaC truncation should increase the rank by the number of moments one wants to conserve. For instance, if one desires mass, momentum, and energy conservation, then the multilinear rank should increase by $(3,3,3)$. Naturally, the observed rank of the solution might be slightly larger.
\end{rem}


\section{Numerical experiments}
We now test the 3d-RAIL algorithm on various benchmark problems. Rank plots show how well the scheme captures low-rank solution structures, and $L^1$ error plots demonstrate the high-order accuracy in time. We assume a uniform mesh in space with $N=N_x=N_y=N_z$ gridpoints in each dimension. Given the uniform mesh, spectral collocation methods discretize the one-dimensional spatial derivatives, with the differentiation matrices found in \cite{Trefethen2000}. As such, we expect the temporal error to dominate. Although we assume periodic boundary conditions, other boundary conditions (and hence other differentiation matrices) could be used. For solutions that decay at infinity, the computational domain is made large enough for sufficient smoothness at the boundary for spectral methods to be used assuming periodic boundary conditions. The Butcher tableaus for the IMEX RK schemes used in our tests are listed in the Appendix. Depending on the model, we either use the HOSVD or LoMaC truncation procedure.

As in the 2d-RAIL scheme \cite{nakao2025reduced}, the time-stepping size for the three-dimensional RAIL scheme appears to satisfy a CFL condition when using IMEX discretizations due to the explicit treatment of the transport terms. It has been shown, at least in the discontinuous Galerkin framework, that when solving advection-diffusion problems with IMEX discretizations, the time-stepping size must be upper-bounded by a constant dependent on the ratio of the diffusion and the square of the advection coefficients \cite{wang2015stability}. Although our proposed scheme is in the finite difference framework, we observed similar restrictions in our numerical tests. As such, we define the time-stepping size by
\begin{equation}\label{eq: dt_restriction}
	\Delta t = \frac{\lambda}{\frac{\max{|f_1'(u)|}}{\Delta x} + \frac{\max{|f_2'(u)|}}{\Delta y} + \frac{\max{|f_3'(u)|}}{\Delta z}},
\end{equation}
where $\lambda>0$ is the CFL scaling factor. We define the $L^1$ error as
\begin{equation}
	\norm{u-u_{\text{exact}}}_1\coloneqq \Delta x\Delta y\Delta z\sum\limits_{i=1}^{N_x}\sum\limits_{j=1}^{N_y}\sum\limits_{k=1}^{N_z}{|u_{ijk}-u_{\text{exact},ijk}|},
\end{equation}
where we do not scale by measure of the domain. Lastly, we set the low-rank tolerance $\varepsilon$ between $10^{-5}$ and $10^{-8}$. In practice, this tolerance can be made larger assuming it is smaller than the local truncation error. The LoMaC truncation procedure scales the solution by a weight function that is assumed to be a Gaussian with sufficient decay; see \cite{guo2024local} and Appendix \ref{app: lomac}. We define the weight function to be $w(\mathbf{x})=\text{Exp}(-s|\mathbf{x}|^2)$, for $s>0$ large enough.

\subsection[]{Advection-diffusion with constant coefficients}

\begin{equation}\label{eq: tests_advdiff_equation}
u_t +u_x+u_y+u_z = d(u_{xx}+u_{yy}+u_{zz}),\qquad x,y,z\in(-\pi,\pi)
\end{equation}
where $d=1/6$. We use the first two Fourier modes, with the exact solution
\begin{equation}\label{eq: advdiff_IC}
	u(x,y,z,t) = 1 + \sum\limits_{k=1}^{2}{e^{-3dk^2t}\sin{(k(x-t))}\sin{(k(y-t))}\sin{(k(z-t))}}.
\end{equation}
As seen in Figure \ref{fig: advdiff_accuracytest}, the expected accuracies are observed for the RAIL scheme when using IMEX111, IMEX222 and IMEX443 with mass conservative LoMaC truncation (we $s=4$). We used a mesh size $N=100$, tolerance $\varepsilon=10^{-6}$, final time $T_f=0.5$, and $\lambda$ ranging from 0.2 to 1. Despite observing the expected accuracy, the $L^1$ error for the first-order scheme (and even the second-order scheme) is quite large. However, recall that we do not scale the $L^1$ error by the measure of the domain, which in this case would be $|\mathbf{\Omega}|=(2\pi)^3$; scaling by the measure of the domain would provide a better comparison against the $L^{\infty}$ error which is not as large.

\begin{figure}[t!]
\centering
\begin{minipage}[b]{0.48\linewidth}
	\centering
	\includegraphics[width=0.85\textwidth]{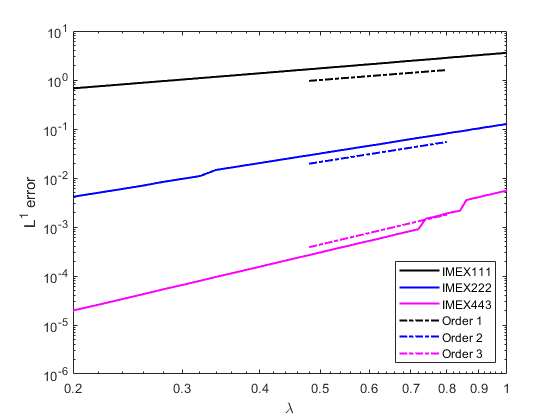}
	\caption{Error plot for \eqref{eq: tests_advdiff_equation}.}
	\label{fig: advdiff_accuracytest}
\end{minipage}
\begin{minipage}[b]{0.02\linewidth}
\ \\
\end{minipage}
\begin{minipage}[b]{0.48\linewidth}
	\centering
	\includegraphics[width=0.85\textwidth]{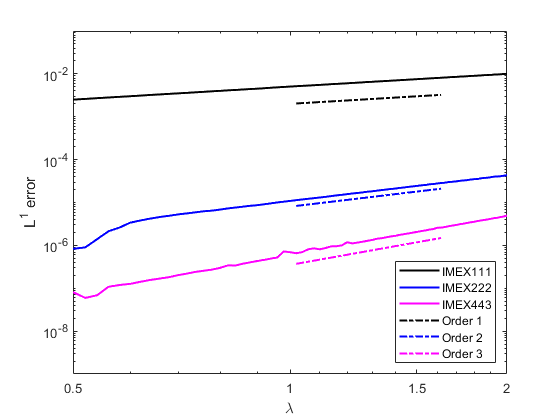}
	\caption{Error plot for \eqref{eq: tests_rotation1}.}
	\label{fig: rotation_accuracytest}
\end{minipage}
\end{figure}

\subsection[]{Rigid body rotation with diffusion, about $\hat{z}$}

\begin{equation}\label{eq: tests_rotation1}
u_t - yu_x+xu_y = d(u_{xx}+u_{yy}+u_{zz}) + c(x,y,z,t),\qquad x,y,z\in(-2\pi,2\pi)
\end{equation}
where the flow field describes rotation about the vector $\hat{z}$. To test the accuracy of the scheme, we use the manufactured solution $u(x,y,z,t)=\text{exp}(-(x^2+2y^2+3z^2+3dt))$ with $d=1/3$, for which the source term $c(x,y,z,t)$ offsets the rotation and is
\begin{equation}\label{eq: rotation_sourceterm}
	c(x,y,z,t) = e^{-(x^2+2y^2+3z^2+3dt)}\Big(-2xy-d(-9+4x^2+16y^2+36z^2)\Big).
\end{equation}
As seen in Figure \ref{fig: rotation_accuracytest}, the expected accuracies are observed for the RAIL scheme when using IMEX111, IMEX222 and IMEX443 with non-conservative HOSVD truncation; due to the source term, the mass is not conserved. We used a mesh size $N=100$, tolerance $\varepsilon=10^{-8}$, final time $T_f=0.5$, and $\lambda$ ranging from 0.5 to 2. When a low-rank source term is involved, we must express it in a Tucker tensor format. By inspection, it is straightforward for one to write down a Tucker decomposition of \eqref{eq: rotation_sourceterm}.

To test the rank of the solution, we set $d=1/12$, $c(x,y,z,t)=0$, double the speed of the rotation,
\begin{equation}\label{eq: tests_rotation2}
u_t - 2yu_x+2xu_y = \frac{1}{12}(u_{xx}+u_{yy}+u_{zz}),\qquad x,y,z\in(-2\pi,2\pi)
\end{equation}
and set the initial condition to $u_0(x,y,z)=\text{exp}(-(x^2+9y^2+z^2))$. Since the mass is conserved for this problem, we use the mass conservative LoMaC truncation (we set $s=1$). The solution rotates counterclockwise about the positive $z$-axis while slowly diffusing. Theoretically, the exact multilinear rank should be $(1,1,1)$ as the solution (re)aligns with the axes at $t=0$, $t=\pi/4$, $t=\pi/2$, $t=3\pi/4$ and so forth. The rank in $x$ and $y$ should increase and decrease in between these time stamps as the solution rotates about $\hat{z}$, whereas the rank in $z$ should remain one since the rotation is only occuring in the $xy$-plane. As seen in Figure \ref{fig: rotation_ranktest1}, the RAIL scheme captures this behavior in all three plots, although the multilinear rank doesn't quite reach $(1,1,1)$ at $t=0,\pi/2,\pi/2,3\pi/4$ as per Remark \ref{rem: lomac_increasedrank}. The decrease in the magnitude of the ``humps" due to the slow diffusion is captured in all three plots. For the rank plots, we used a mesh size $N=100$, tolerance $\varepsilon=10^{-6}$, and $\lambda=0.45$. Mass conservation was observed to machine precision, as seen in Figure \ref{fig: rotation_mass1}.

\begin{figure}[t!]
\centering
\begin{minipage}[b]{0.32\linewidth}
	\centering
	\includegraphics[width=\textwidth]{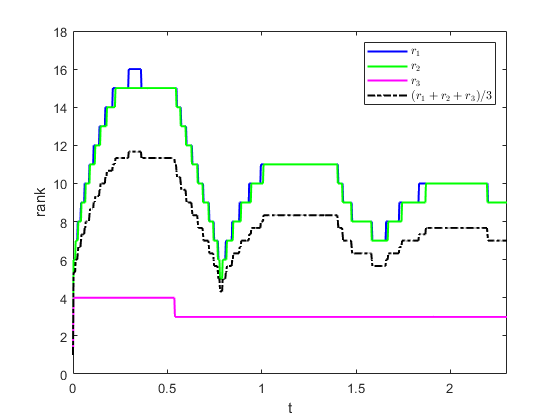}
\end{minipage}
\begin{minipage}[b]{0.32\linewidth}
	\centering
	\includegraphics[width=\textwidth]{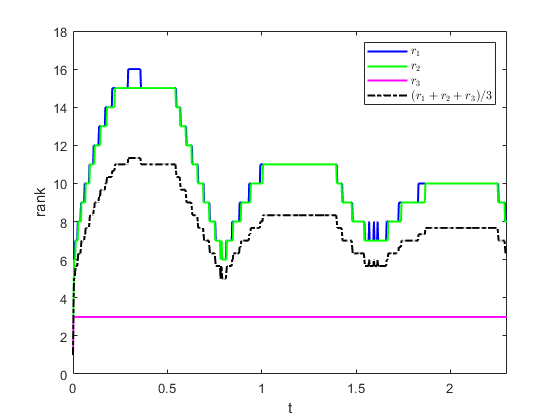}
\end{minipage}
\begin{minipage}[b]{0.32\linewidth}
	\centering
	\includegraphics[width=\textwidth]{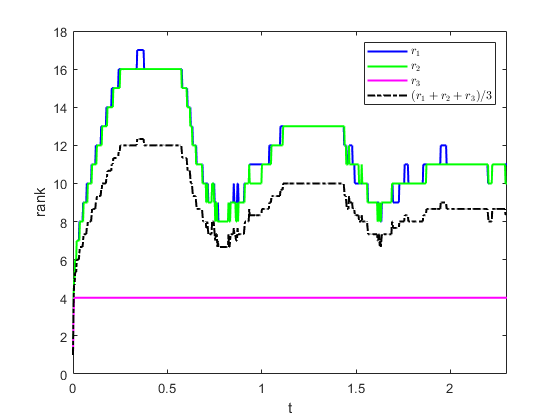}
\end{minipage}
\caption{The multilinear rank and average rank of the solution to \eqref{eq: tests_rotation2} with initial condition $u_0(x,y,z)=\text{exp}(-(x^2+9y^2+z^2))$ using IMEX111 \textit{(left)}, IMEX222 \textit{(middle)} and IMEX443 \textit{(right)}.}
\label{fig: rotation_ranktest1}
\end{figure}

\begin{figure}[t!]
\centering
\begin{minipage}[b]{0.48\linewidth}
	\centering
	\includegraphics[width=0.85\textwidth]{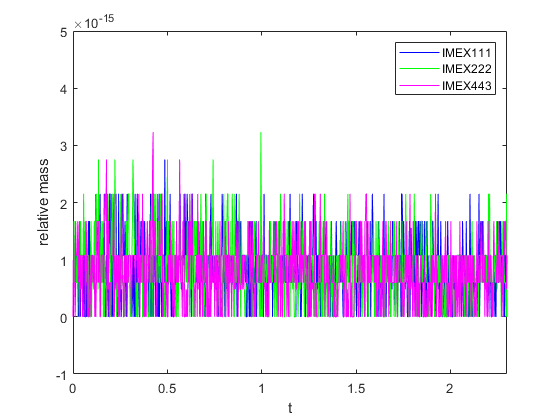}
	\caption{Relative change in mass for \eqref{eq: tests_rotation2}.}
	\label{fig: rotation_mass1}
\end{minipage}
\begin{minipage}[b]{0.02\linewidth}
\ \\
\end{minipage}
\begin{minipage}[b]{0.48\linewidth}
	\centering
	\includegraphics[width=0.85\textwidth]{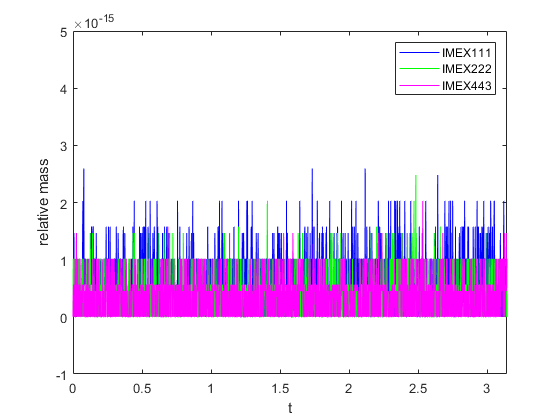}
	\caption{Relative change in mass for \eqref{eq: tests_rotation3}.}
	\label{fig: rotation_mass2}
\end{minipage}
\end{figure}

In our experiments, we observed that the solution only diffused and did not rotate when using large truncation tolerances and/or very small time-stepping sizes with the non-conservative HOSVD truncation. This behavior was especially seen for the first-order IMEX111 time discretization, e.g., with $\varepsilon=10^{-6}$ and $\lambda=0.25$; decreasing the tolerance corrected this, as did increasing the time-stepping size. This observation was also seen with IMEX222, although only for extremely large tolerances, e.g., $\varepsilon=10^{-1}$, and extremely small time-stepping sizes. This behavior is known to occur with (low-order) BUG methods since the rotation is far from the tangent space and thus requires either a very small truncation tolerance or a large time-stepping size so that the rotation dynamics are more dominant; see Example 4.1 in \cite{appelo2025robust}.

\subsection[]{Rigid body rotation with diffusion, about $\hat{x}+\hat{y}+\hat{z}$}
\begin{equation}\label{eq: tests_rotation3}
u_t + (-y+z)u_x+(x-z)u_y+(-x+y)u_z = d(u_{xx}+u_{yy}+u_{zz}),\qquad x,y,z\in(-2\pi,2\pi)
\end{equation}
where $d=1/12$ and the flow field describes rotation about the vector $\hat{x}+\hat{y}+\hat{z}$. As with equation \eqref{eq: tests_rotation2}, we use the mass conservative LoMaC truncation (we set $s=1$). Here, we test if the RAIL scheme accurately captures and maintains low-rank structure in solutions for which rotation occurs in all directions. We set the initial condition as $u_0(x,y,z)=\text{exp}(-2((x-\pi/2)^2+(y+\pi/2)^2+z^2))$. This is a Gaussian distribution centered at the point $(\pi/2,-\pi/2,0)$ that rotates counterclockwise about the vector $\hat{x}+\hat{y}+\hat{z}$ while slowly diffusing. As expected, Figure \ref{fig: rotation_ranktest2} shows the RAIL scheme maintains the low-rank structure in the solution for all three time discretizations, with IMEX443 performing the best. The ranks are slightly higher than one might expect, as per Remark \ref{rem: lomac_increasedrank}. We used a mesh size $N=100$, tolerance $\varepsilon=10^{-6}$, and $\lambda=0.5$. Mass conservation was observed to machine precision, as seen in Figure \ref{fig: rotation_mass2}.
\begin{figure}[t!]
\centering
\begin{minipage}[b]{0.32\linewidth}
	\centering
	\includegraphics[width=\textwidth]{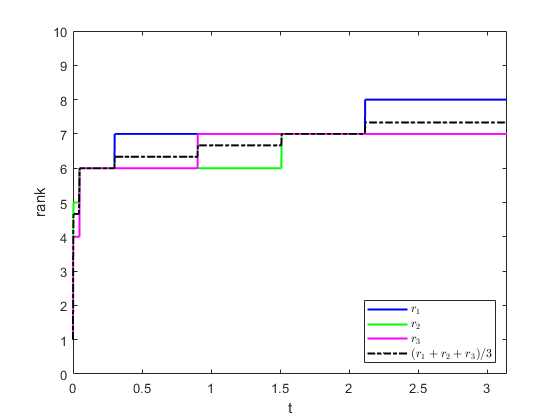}
\end{minipage}
\begin{minipage}[b]{0.32\linewidth}
	\centering
	\includegraphics[width=\textwidth]{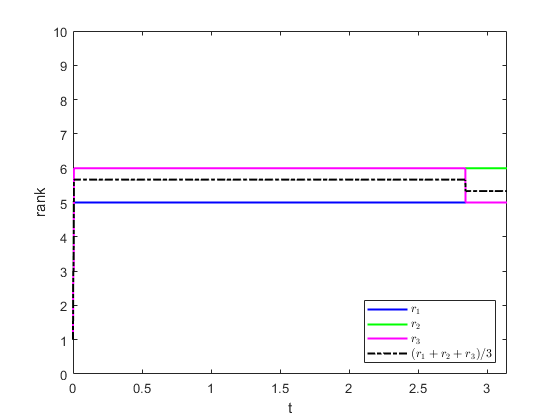}
\end{minipage}
\begin{minipage}[b]{0.32\linewidth}
	\centering
	\includegraphics[width=\textwidth]{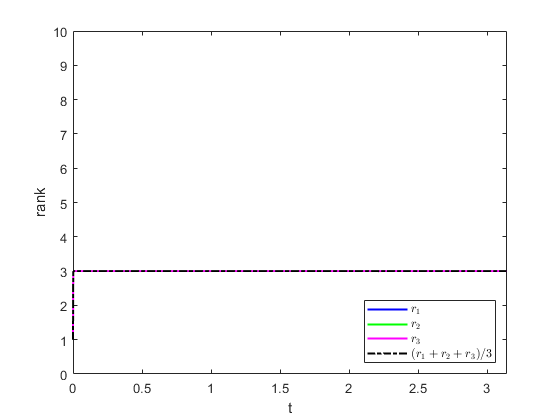}
\end{minipage}
\caption{The multilinear rank and average rank of the solution to \eqref{eq: tests_rotation3} with initial condition $u_0(x,y,z)=\text{exp}(-2((x-\pi/2)^2+(y+\pi/2)^2+z^2))$ using IMEX111 \textit{(left)}, IMEX222 \textit{(middle)} and IMEX443 \textit{(right)}.}
\label{fig: rotation_ranktest2}
\end{figure}

\subsection[]{Rigid body rotation with diffusion and time-dependent flow field, about $\hat{z}$}
\begin{equation}\label{eq: tests_rotationtimedependent}
u_t - tyu_x+txu_y = d(u_{xx}+u_{yy}+u_{zz}) + c(x,y,z,t),\qquad x,y,z\in(-2\pi,2\pi)
\end{equation}
where the flow field describes rotation about the vector $\hat{z}$ in which the speed starts at zero and increases linearly with $t$. To test the accuracy of the scheme, we use the manufactured solution $u(x,y,z,t)=\text{exp}(-(x^2+2y^2+3z^2+3dt))$ with $d=1/3$, for which the source term $c(x,y,z,t)$ is
\begin{equation}\label{eq: rotationtimedependent_sourceterm}
	c(x,y,z,t) = e^{-(x^2+2y^2+3z^2+3dt)}\Big(-2txy-d(-9+4x^2+16y^2+36z^2)\Big).
\end{equation}
As seen in Figure \ref{fig: rotationtimedependent_accuracytest}, the expected accuracies are observed for the RAIL scheme when using IMEX111, IMEX222 and IMEX443 with non-conservative HOSVD truncation; due to the source term, the mass is not conserved. We used a mesh size $N=100$, tolerance $\varepsilon=10^{-8}$, final time $T_f=0.5$, and $\lambda$ ranging from 0.5 to 2. As with the time-independent flow field case, it is straightforward to write down a Tucker decomposition of \eqref{eq: rotationtimedependent_sourceterm}.

\begin{figure}[t!]
\centering
\begin{minipage}[b]{0.48\linewidth}
	\centering
	\includegraphics[width=0.85\textwidth]{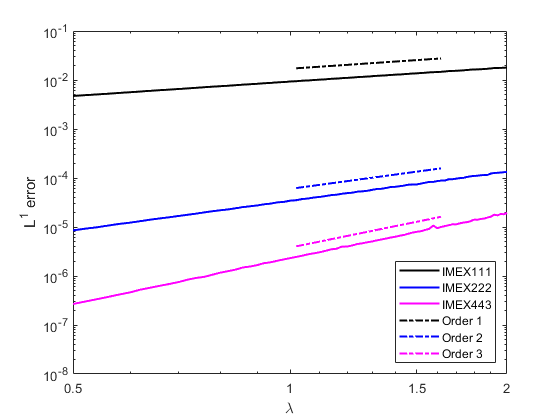}
	\caption{Error plot for \eqref{eq: tests_rotationtimedependent}.}
	\label{fig: rotationtimedependent_accuracytest}
\end{minipage}
\begin{minipage}[b]{0.02\linewidth}
\ \\
\end{minipage}
\begin{minipage}[b]{0.48\linewidth}
	\centering
	\includegraphics[width=0.85\textwidth]{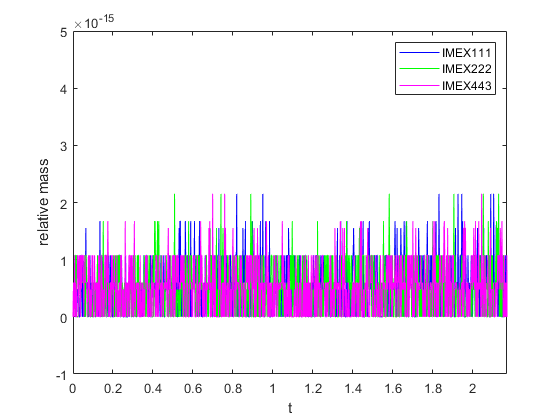}
	\caption{Relative change in mass for \eqref{eq: tests_rotationtimedependent2}.}
	\label{fig: rotation_mass3}
\end{minipage}
\end{figure}

To test the rank of the solution, we set $d=1/12$, $c(x,y,z,t)=0$, double the speed of the rotation,
\begin{equation}\label{eq: tests_rotationtimedependent2}
u_t - 2tyu_x+2txu_y = \frac{1}{12}(u_{xx}+u_{yy}+u_{zz}),\qquad x,y,z\in(-2\pi,2\pi)
\end{equation}
and set the initial condition to $u_0(x,y,z)=\text{exp}(-(x^2+9y^2+z^2))$. Since mass is conserved for this problem, we use the mass conservative LoMaC truncation (we set $s=2$). This is essentially the same rank test as was done for equation \eqref{eq: tests_rotation2}, but the speed has been scaled linear in $t$ so that the solution will (re)align with the axes at times $t=\sqrt{\pi/1},\sqrt{\pi},\sqrt{3\pi/2}$, and so forth. As seen in Figure \ref{fig: rotationtimedependent_ranktest}, the RAIL scheme captures this behavior with IMEX111, IMEX222 and IMEX443, although the multilinear rank doesn't quite reach $(1,1,1)$ at $t=\sqrt{\pi/1},\sqrt{\pi},\sqrt{3\pi/2}$, as per Remark \ref{rem: lomac_increasedrank}. The decrease in the magnitude of the ``humps" due to the slow diffusion is captured in all three plots, but IMEX222 and IMEX443 do a better job capturing the return to low-rank. For the rank plots, we used a mesh size $N=100$, tolerance $\varepsilon=10^{-6}$, and $\lambda=0.9$. Mass conservation was observed to machine precision, as seen in Figure \ref{fig: rotation_mass3}.

\begin{figure}[t!]
\centering
\begin{minipage}[b]{0.32\linewidth}
	\centering
	\includegraphics[width=\textwidth]{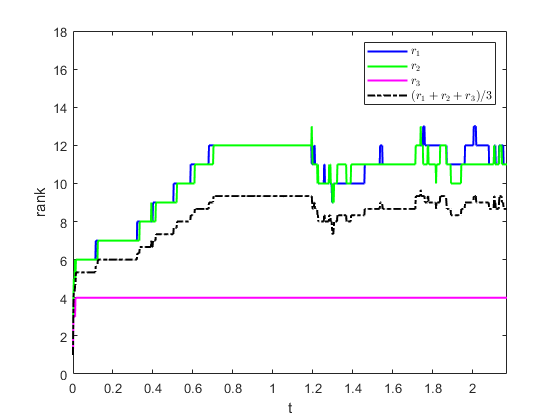}
\end{minipage}
\begin{minipage}[b]{0.32\linewidth}
	\centering
	\includegraphics[width=\textwidth]{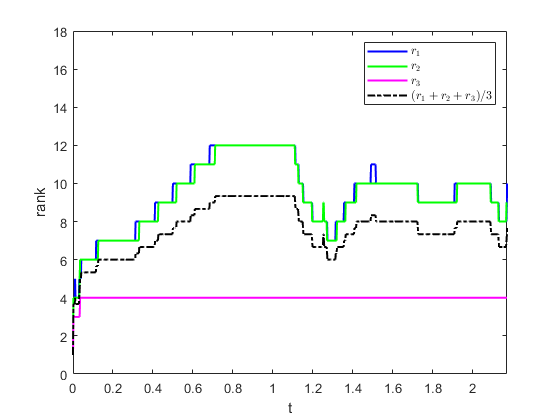}
\end{minipage}
\begin{minipage}[b]{0.32\linewidth}
	\centering
	\includegraphics[width=\textwidth]{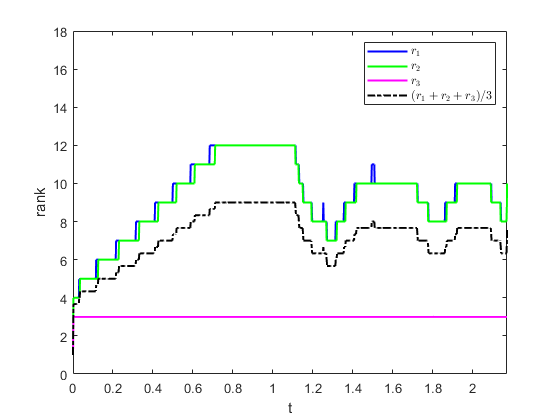}
\end{minipage}
\caption{The multilinear rank and average rank of the solution to \eqref{eq: tests_rotationtimedependent2} with initial condition $u_0(x,y,z)=\text{exp}(-(x^2+9y^2+z^2))$ using IMEX111 \textit{(left)}, IMEX222 \textit{(middle)} and IMEX443 \textit{(right)}.}
\label{fig: rotationtimedependent_ranktest}
\end{figure}

\subsection[]{Dougherty-Fokker-Planck equation}
\begin{equation}\label{eq: tests_DFP}
f_t - ((v_x-\overline{v}_x)f)_{v_x}-((v_y-\overline{v}_y)f)_{v_y}-((v_z-\overline{v}_z)f)_{v_z} = D(f_{v_xv_x}+f_{v_yv_y}+f_{v_zv_z}),\quad v_x,v_y,v_z\in(-8,8)
\end{equation}

Without loss of generality, we set the thermal speed to $v_{th}=\sqrt{2D}=\sqrt{2RT}=1$. We desire conservation of mass, momentum, and energy, obtained by taking the first few moments of the distribution function,
\begin{equation}
    \left(n,n\mathbf{u},E\right)^T = \int_{\mathbb{R}^3}{\left(1,\mathbf{v},\frac{|\mathbf{v}|^2}{2}\right)^Tfd^3v},
\end{equation}
where $E=(n|\mathbf{u}|^2+3nRT)/2$ is the energy and is related to the temperature $T=\frac{1}{3Rn}\int_{\mathbb{R}^3}{|\mathbf{v}-\mathbf{u}|^2fd^3v}$. Given the number density $n$, bulk velocity $\mathbf{u}$, and temperature $T$, the equilibrium solution is the Maxwellian distribution function
\begin{equation}\label{eq: DFP_relaxation}
	f_M^{\infty}(\mathbf{v},t=0) = \frac{n}{(2\pi RT)^{3/2}}\text{exp}\left(-\frac{|\mathbf{v}-\mathbf{u}|^2}{2RT}\right).
\end{equation}
Relaxation of the system is tested using the initial distribution function
\begin{equation}\label{eq: DFP_IC}
	f_0(\mathbf{v}) = f_{M1}(\mathbf{v}) + f_{M2}(\mathbf{v}),
\end{equation}
where $f_{M1}$ and $f_{M2}$ are two randomly generated Maxwellians such that the total number density, bulk velocity, and temperature are $n=\pi^{3/2}$, $\mathbf{u}=\mathbf{0}$, and $T=3$, respectively. As such, the solution will relax to Maxwellian \eqref{eq: DFP_relaxation} as $t\rightarrow\infty$. The number densities, bulk velocities, and temperatures that define the Maxwellians $f_{M1}$ and $f_{M2}$ are listed in Table \ref{table: twomaxwellians}.
\begin{table}[h!] 
\begin{center}
\label{}
\begin{tabular}{|c|c|c|} 
\hline 
&$f_{M1}$&$f_{M2}$\\
\hline
$n$&0.8037121822811545&4.764615814550553\\
\hline
$u_x$&-0.3403147128006618&0.05740548475117823\\
\hline
$u_y$&0&0\\
\hline
$u_z$&0&0\\
\hline
$T$&0.1033754314349305&3.442950196134546\\
\hline
\end{tabular} 
\caption{$n=\pi^{3/2}$, $\overline{\mathbf{v}}=\mathbf{0}$, and $T=3$.}
\label{table: twomaxwellians}
\end{center} 
\end{table}

The initial condition \eqref{eq: DFP_IC} is rank-two, and the equilibrium solution \eqref{eq: DFP_relaxation} is rank-one. The rank of the solution will immediately increase before quickly decreasing as the solution relaxes. Figure \ref{fig: DFP_ranktest} shows the RAIL scheme captures this behavior with all three time discretizations. Due to their smaller \textit{temporal} error, higher-order schemes do a better job capturing rapid changes of the solution in time, e.g., the fast rank decrease; one could also use a smaller time-stepping size to same same effect. We used a mesh size $N=100$, tolerance $\varepsilon=10^{-6}$, and $\lambda=0.5$. The LoMaC truncation procedure was used to conserve mass, momentum, and energy (we set $s=1$); see Figure \ref{fig: DFP_moments}.

Last, we show $L^1$ convergence $\norm{f-f^{\infty}_M}_1$ and Kullback relative entropy dissipation $\int_{\mathbb{R}^3}{f\log{(f/f^{\infty}_M)}d^3v}$ of the solution. To ensure the correct steady-state Maxwellian distribution is used, we leverage the quadrature corrected moment (QCM) procedure in \cite{taitano_jcp_2017_ep_rfp} to compute the Maxwellian distribution \eqref{eq: DFP_relaxation} whose \textit{discrete} moments match the discrete moments of the initial distribution \eqref{eq: DFP_IC}. We used a mesh $N=100$, tolerance $\varepsilon=10^{-4}$, and $\lambda=0.9$. Figure \ref{fig: DFP_L1decay} shows good equilibrium preservation, with the solution converging to the corrected Maxwellian distribution \eqref{eq: DFP_relaxation} up to $\mathcal{O}(10^{-8})$ despite using a vary large truncation tolerance of $10^{-4}$; the TensorLab library performs many operations in single precision for computational efficiency. In addition, the discrete Kullback relative entropy decreases slightly better, down to $\mathcal{O}(10^{-11})$.

\begin{figure}[t!]
\centering
\begin{minipage}[b]{0.32\linewidth}
	\centering
	\includegraphics[width=\textwidth]{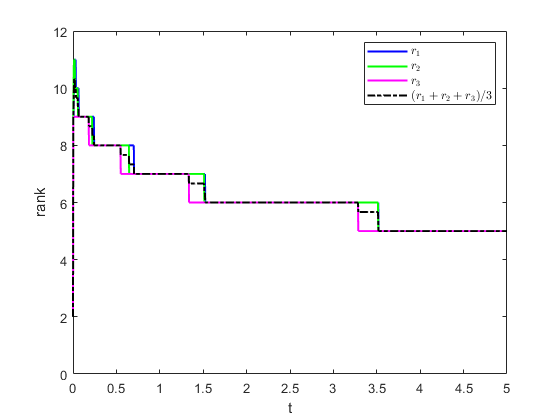}
\end{minipage}
\begin{minipage}[b]{0.32\linewidth}
	\centering
	\includegraphics[width=\textwidth]{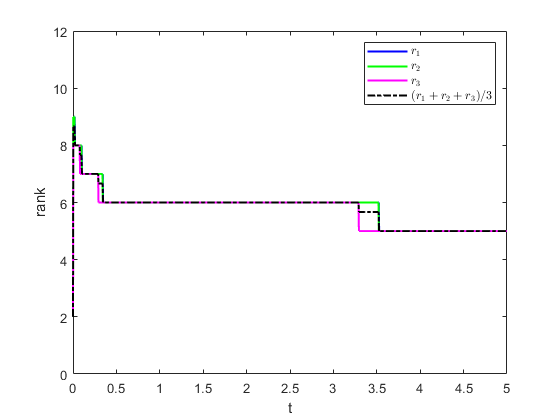}
\end{minipage}
\begin{minipage}[b]{0.32\linewidth}
	\centering
	\includegraphics[width=\textwidth]{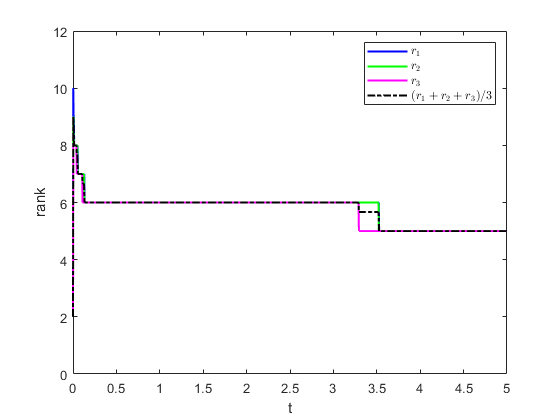}
\end{minipage}
\caption{The multilinear rank and average rank of the solution to \eqref{eq: tests_DFP} with initial condition \eqref{eq: DFP_IC} using IMEX111 \textit{(left)}, IMEX222 \textit{(middle)} and IMEX443 \textit{(right)}.}
\label{fig: DFP_ranktest}
\end{figure}

\begin{figure}[t!]
\centering
\begin{minipage}[b]{0.32\linewidth}
	\centering
	\includegraphics[width=\textwidth]{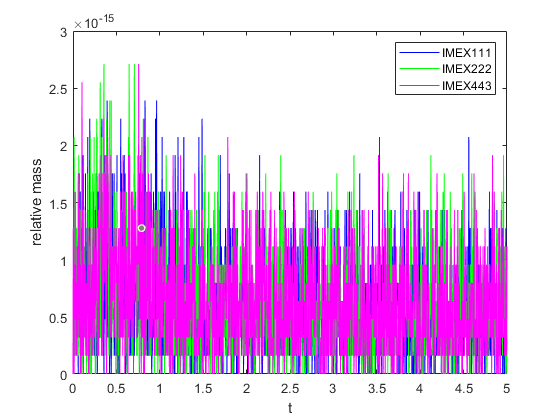}
\end{minipage}
\begin{minipage}[b]{0.32\linewidth}
	\centering
	\includegraphics[width=\textwidth]{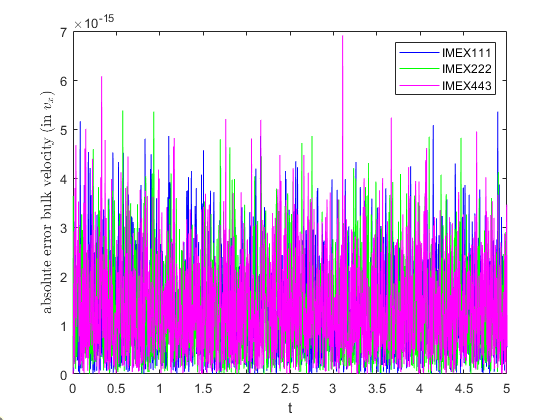}
\end{minipage}
\begin{minipage}[b]{0.32\linewidth}
	\centering
	\includegraphics[width=\textwidth]{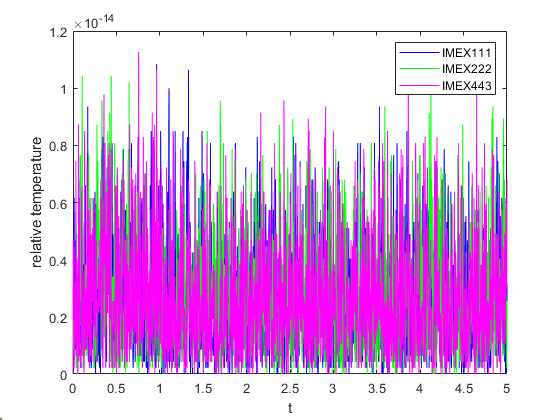}
\end{minipage}
\caption{The conservation of the mass \textit{(left)}, momentum in $v_x$ \textit{(middle)}, and energy \textit{(right)}, for \eqref{eq: tests_DFP} with initial condition \eqref{eq: DFP_IC}.}
\label{fig: DFP_moments}
\end{figure}

\begin{figure}[t!]
\centering
\begin{minipage}[b]{0.48\linewidth}
	\centering
	\includegraphics[width=0.85\textwidth]{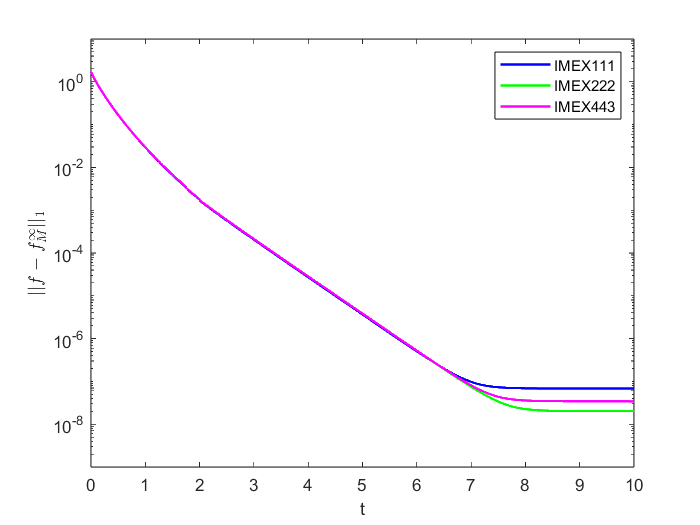}
	\caption{Equilibrium preservation.}
	\label{fig: DFP_L1decay}
\end{minipage}
\begin{minipage}[b]{0.02\linewidth}
\ \\
\end{minipage}
\begin{minipage}[b]{0.48\linewidth}
	\centering
	\includegraphics[width=0.85\textwidth]{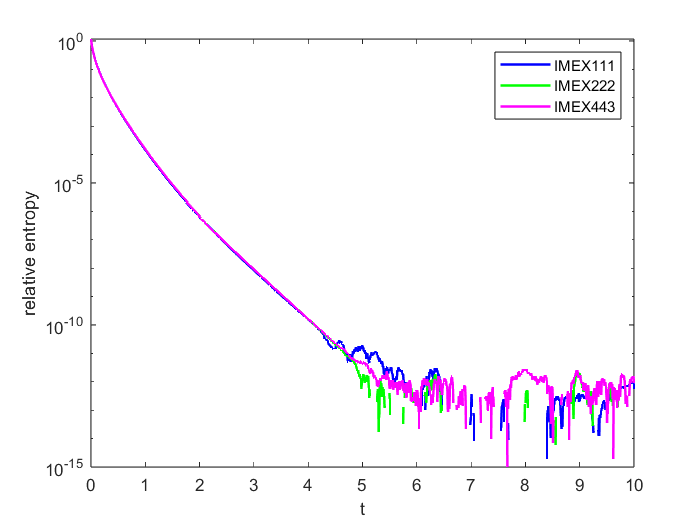}
	\caption{Relative entropy dissipation.}
	\label{fig: DFP_relent}
\end{minipage}
\end{figure}

\subsection[]{Viscous Burgers' equation}
To stress test the 3d-RAIL method, we consider the nonlinear viscous Burgers' equation,
\begin{equation}\label{eq: tests_viscousBurgers}
	u_t + \left(\frac{u^2}{2}\right)_x +  \left(\frac{u^2}{2}\right)_y +  \left(\frac{u^2}{2}\right)_z = d(u_{xx}+u_{yy}+u_{zz}) + c(x,y,z,t),\qquad x,y,z\in(-\pi,\pi).
\end{equation}

We verify the accuracy of the scheme using the manufactured solution $u(x,y,z,t)=e^{-3dt}\sin{(x+y+z)}$ with $d=1/2$, for which the source term
\begin{equation}
	c(x,y,z,t) = \frac{3}{2}e^{-6dt}\sin{(2(x+y+z))}
\end{equation}
can be expressed as a multilinear rank $(2,2,2)$ Tucker tensor using trigonometric identities. As seen in Figure \ref{fig: viscousBurgers}, the expected accuracies are observed when using IMEX111, IMEX222 and IMEX443 with the mass conservative LoMaC truncation (we set $s=5$). Although the solution generally doesn't conserve mass with a source term present, the mass in this case is zero for all time. We used a mesh size $N=100$, tolerance $\varepsilon=10^{-6}$, final time $T_f=0.3$, and $\lambda$ ranging from 0.1 to 1.

Next, we test the 3d-RAIL method on equation \eqref{eq: tests_viscousBurgers} as the solution gradient increases. We let $c=0$ and consider the initial condition $u_0(x,y,z)=\sin{(x+y+z)}$. If $d=0$, then a shock forms at the breaking time $t_b=1/3$. For large enough $d>0$, the diffusion controls the shock and damps the solution to zero as $t\rightarrow\infty$. We vary $d$, for which the solution gradient formed by the intersecting characteristics will be steeper as $d$ decreases. However, as time goes on, the diffusion will eventually dampen the solution. Using a mesh size $N=150$, tolerance $\varepsilon=10^{-5}$, $\lambda=0.9$, and IMEX222 with the mass conservative LoMaC truncation ($s=5$), the average rank $(r_1+r_2+r_3)/3$ and mass are shown in Figure \ref{fig: viscousBurgers} for varying diffusion coefficient $d\in\{1,1/2,1/3,1/4,1/5\}$. The mass is conserved to machine precision, and the multilinear rank increases when the convection is more dominant; using the non-conservative HOSVD truncation slightly reduces the observed multilinear rank, as per Remark \ref{rem: lomac_increasedrank}. Although the multilinear rank is relatively large, which in turn noticeably increases the computational cost due to the flow field being $u/2$, we note that the rank is independent of the mesh. That is, the proposed method becomes advantageous for very fine meshes. We note that the ``steep" gradient forms in all directions and is spherically symmetric. Slices of the solution in the $xy$-plane at $z=z_{75}\approx 0$ are shown in Figure \ref{fig: viscousBurgers_snapshots}, where we observe the solution gradient increasing as $d$ decreases.

\begin{figure}[t!]
\centering
\begin{minipage}[b]{0.32\linewidth}
	\centering
	\includegraphics[width=\textwidth]{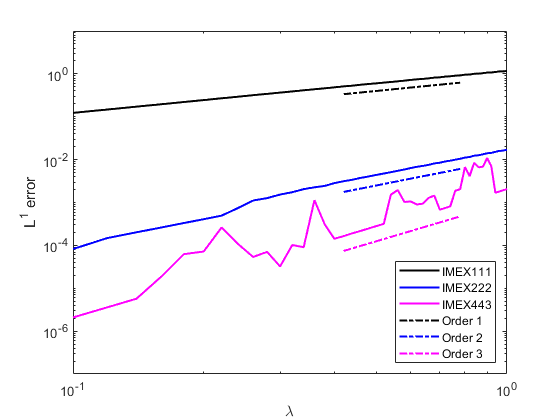}
\end{minipage}
\begin{minipage}[b]{0.32\linewidth}
	\centering
	\includegraphics[width=\textwidth]{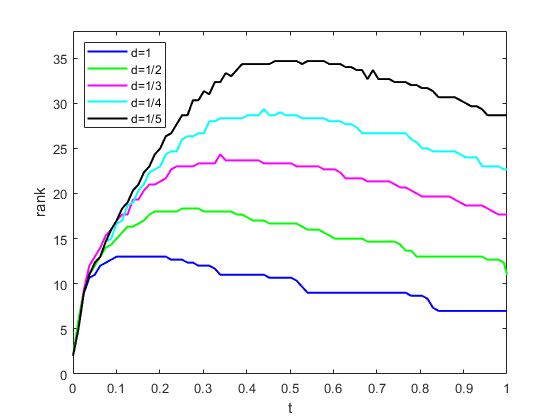}
\end{minipage}
\begin{minipage}[b]{0.32\linewidth}
	\centering
	\includegraphics[width=\textwidth]{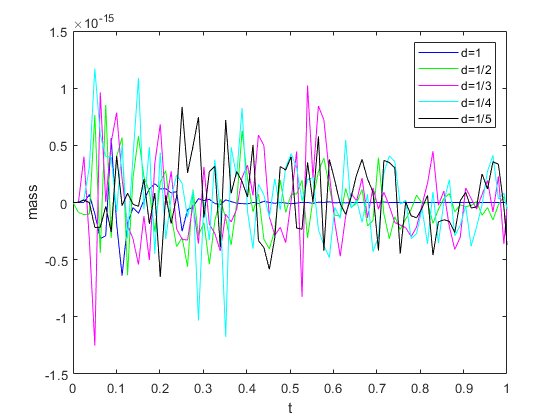}
\end{minipage}
\caption{Viscous Burgers' equation \eqref{eq: tests_viscousBurgers}. \textit{(Left)} Error plot. \textit{(Middle)} Average rank of the solution for $d=1,1/2,1/3,1/4,1/5$. \textit{(Right)} Mass of the solution for $d=1,1/2,1/3,1/4,1/5$.}
\label{fig: viscousBurgers}
\end{figure}

\begin{figure}[t!]
\centering
\begin{minipage}[b]{0.32\linewidth}
	\centering
	\includegraphics[width=\textwidth]{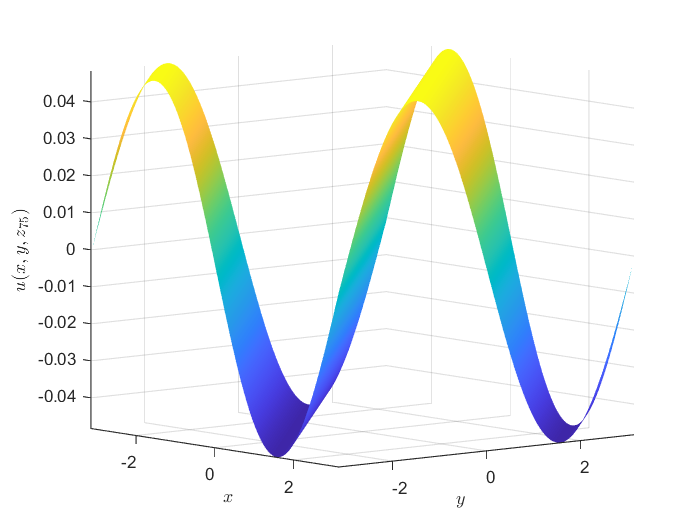}
\end{minipage}
\begin{minipage}[b]{0.32\linewidth}
	\centering
	\includegraphics[width=\textwidth]{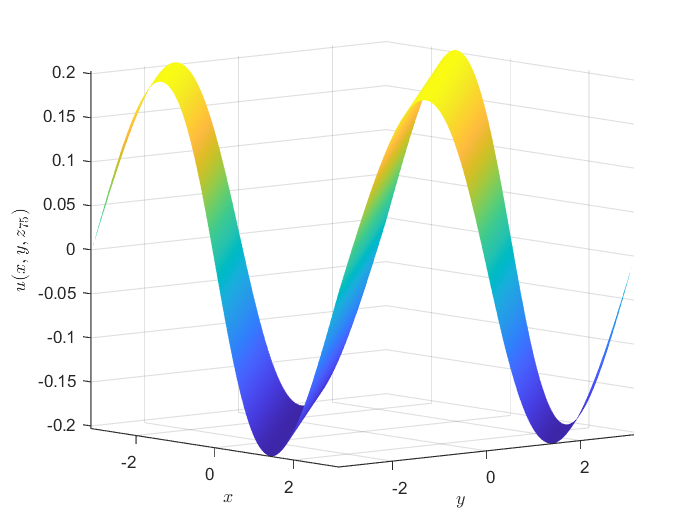}
\end{minipage}
\begin{minipage}[b]{0.32\linewidth}
	\centering
	\includegraphics[width=\textwidth]{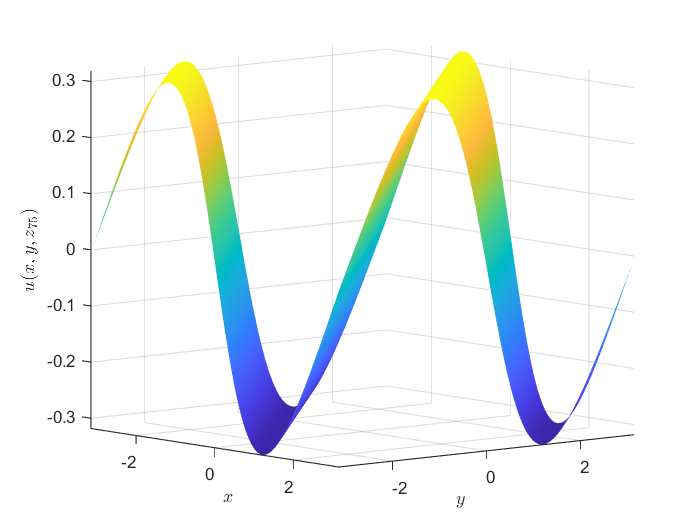}
\end{minipage}
\\
\begin{minipage}[b]{0.32\linewidth}
	\centering
	\includegraphics[width=\textwidth]{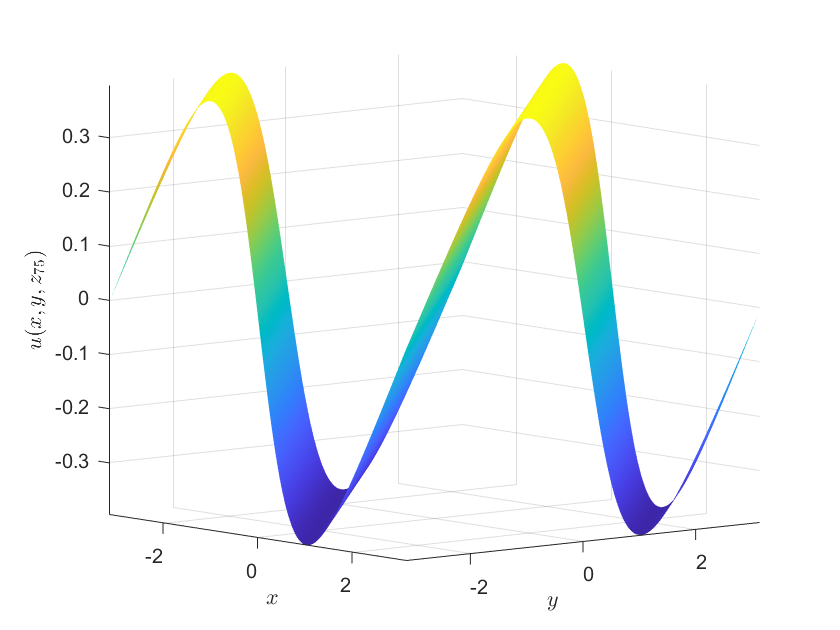}
\end{minipage}
\begin{minipage}[b]{0.32\linewidth}
	\centering
	\includegraphics[width=\textwidth]{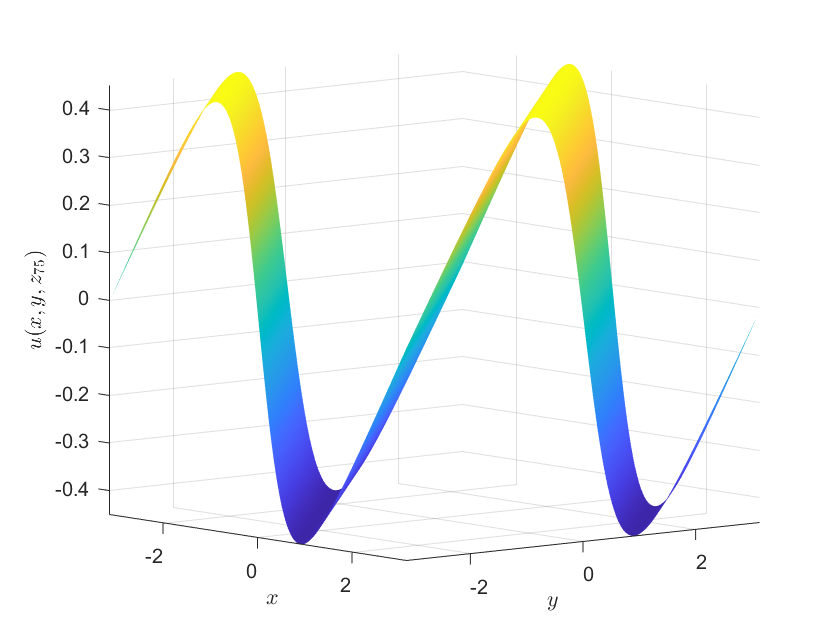}
\end{minipage}
\caption{Solution to viscous Burgers' equation \eqref{eq: tests_viscousBurgers}, slices in the $xy$-plane at $z\approx 0$. \textit{(Top row from left to right)} $d=1$, $d=1/2$, $d=1/3$. \textit{(Bottom row from left to right)} $d=1/4$, $d=1/5$.}
\label{fig: viscousBurgers_snapshots}
\end{figure}

\begin{rem}
Functions with discontinuities or sharp gradients can be low-rank when the sharp gradients align with the coordinate axes. For instance, the rectangular box function $u(x_1,x_2,x_3)=\prod_{i=1}^{3}{H(x_i+1)-H(x_i-1)}$, where $H(x)$ is the Heaviside step function, discretized over a mesh for $\Omega_x=[-2,2]^3$ using the HOSVD with truncation tolerance $\varepsilon=10^{-13}$ is multilinear rank $(1,1,1)$. However, discontinuous or sharp gradient solutions to time-dependent PDEs develop oscillations due to the Gibbs phenomenon. Although total variation diminishing (TVD) or total variation bounded (TVB) methods are commonly used to control spurious oscillations in full-rank methods, extending these methods to the low-rank framework remains a relatively open and ongoing area of research in the scientific community and was beyond the scope of the current paper.
\end{rem}


\section{Conclusion}
In this paper, we proposed a reduced augmentation implicit low-rank (RAIL) integrator for solving three-dimensional convection-diffusion equations. The two-dimensional RAIL algorithm (matrix case) from previous work was extended to three-dimensions using a Tucker decomposition of the third-order tensor solution. The partial differential equations were fully discretized into tensor equations. By spanning the low-order prediction (or updated) bases with the bases from the previous RK stages in a reduced augmentation procedure, the tensor equations were projected onto a richer space that allowed high-order implicit-explicit methods to be used. Several numerical tests demonstrated the proposed RAIL scheme's ability to achieve high-order accuracy using implicit-explicit integrators, capture low-rank structure in solutions, and conserve mass, momentum, and energy. As higher-dimensional models are considered, the recursive structure of higher-dimensional tensor decompositions will rely on efficient three-dimensional solvers such as 3d-RAIL. Ongoing and future work includes extending to hierarchical Tucker decomposition or other tree tensor networks to address dimensions $d\geq 4$, as well as deriving rigorous error bounds for the high-order RAIL scheme.


\section*{Declarations}

\subsection*{Funding} The authors have no funding to report.


\subsection*{Competing Interests} The authors have no competing interests.

\subsection*{Acknowledgements} Joseph Nakao wants to thank Valeria Simoncini for an insightful discussion at SIAM LA24 about her third-order tensor linear equation solver used in the proposed 3d-RAIL integrator.


\appendix

\section{Direct solver from \cite{simoncini2020numerical}}
We summarize the direct solver from \cite{simoncini2020numerical} for efficiently solving third-order tensor linear equations of the form
\begin{equation}\label{eq: 3Dlinearsystem}
	\big(\mathbf{M}_1\otimes\mathbf{A}_1\otimes\mathbf{H}+\mathbf{A}_2\otimes\mathbf{M}\otimes\mathbf{H}+\mathbf{H}_3\otimes\mathbf{M}\otimes\mathbf{A}_3\big)\text{vec}(\X) = \text{vec}{(\B)},
\end{equation}
where all the coefficient matrices are real and have the same $N\times N$ dimensions, $\B$ is a low-rank, and $\mathbf{M}$, $\mathbf{H}$, $\mathbf{M}_1$, $\mathbf{H}_3$, $\A=\mathbf{M}_1\otimes\mathbf{A}_1\otimes\mathbf{H}+\mathbf{A}_2\otimes\mathbf{M}\otimes\mathbf{H}+\mathbf{H}_3\otimes\mathbf{M}\otimes\mathbf{A}_3$ are nonsingular. The proof is constructive, and with the exception of the righthand being a general third-order tensor, is identical to the proof of Theorem 2.1 in \cite{simoncini2020numerical}. Since the algorithm for the direct solver follows naturally from its constructive proof, we provide it for completeness. Simoncini includes MATLAB code for the rank-1 case in \cite{simoncini2020numerical}; for the extended case, small obvious modifications just need to be made.
\begin{cor}[Extension of Theorem 2.1 in \cite{simoncini2020numerical}]\label{cor: 3Dsolver}
Let $\mathbf{A}_3^T\mathbf{H}^{-T} = \mathbf{QRQ}^*$ be the Schur decomposition of $\mathbf{A}_3^T\mathbf{H}^{-T}$, and $[\mathbf{g}_1,\hdots,\mathbf{g}_N]\coloneqq\mathbf{B}_{(1)}^T\mathbf{H}^{-T}\mathbf{Q}$. Using the mode-1 matricization, the solution $\X$ to equation \eqref{eq: 3Dlinearsystem} is given by
\begin{equation}\label{eq: 3Dsolversoln}
\mathbf{X}_{(1)} = \mathbf{Q}[\hat{\mathbf{z}}_1,\hdots,\hat{\mathbf{z}}_N]^T\in\mathbb{R}^{N\times N^2},
\end{equation}
where for $j=1,...,N$, the vector $\hat{\mathbf{z}}_j=\text{vec}(\hat{\mathbf{Z}}_1)$ is the vectorization of the matrix $\hat{\mathbf{Z}}_j$ that solves the Sylvester equation
\begin{equation}\label{eq: SimonciniSolver_sylveqn}
	\mathbf{M}^{-1}\mathbf{A}_1\mathbf{Z} + \mathbf{Z}(R_{j,j}\mathbf{H}_3^T\mathbf{M}_1^{-T}+\mathbf{A}_2^T\mathbf{M}_1^{-T}) = (\mathbf{M}^{-1}\mathbf{G}_j - \mathbf{W}_{j-1}\mathbf{H}_3^T)\mathbf{M}_1^{-T},
\end{equation}
where $R_{j,j}$ denotes the $(j,j)$ element of the upper triangular matrix $\mathbf{R}$, $\mathbf{G}_j$ is the matrix such that $\mathbf{g}_j=\text{vec}(\mathbf{G}_j)$, and $\mathbf{W}_{j-1}$ is the matrix such that $\mathbf{w}_{j-1}=\text{vec}(\mathbf{W}_{j-1})$ with $\mathbf{w}_{j-1}=[\hat{\mathbf{z}}_1,...,\hat{\mathbf{z}}_{j-1}]R_{1:j-1,j}$. We define $\mathbf{W}_0$ to be an empty array for $j=1$.
\end{cor}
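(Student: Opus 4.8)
The plan is to reduce the third-order tensor linear equation \eqref{eq: 3Dlinearsystem} to a sequence of $N$ decoupled matrix Sylvester equations, solved by forward substitution, by using a Schur decomposition to triangularize one of the three coefficient directions. This follows the mechanism of Theorem 2.1 in \cite{simoncini2020numerical}; the only new ingredient is that $\B$ is a general (low-rank) tensor rather than a rank-one tensor, which affects only the data $\mathbf{g}_j$ and not the structure of the argument.

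First I would recast \eqref{eq: 3Dlinearsystem} in mode-1 matricized form. Using the vectorization convention $\text{vec}(\X\times_1\mathbf{P}\times_2\mathbf{Q}\times_3\mathbf{S})=(\mathbf{S}\otimes\mathbf{Q}\otimes\mathbf{P})\text{vec}(\X)$ together with the unfolding identity $(\X\times_1\mathbf{P}\times_2\mathbf{Q}\times_3\mathbf{S})_{(1)}=\mathbf{P}\mathbf{X}_{(1)}(\mathbf{S}\otimes\mathbf{Q})^T$ from Section \ref{sec: tuckerdecomp}, each Kronecker term in \eqref{eq: 3Dlinearsystem} becomes a mode-$n$ product and the matricized equation reads
\begin{equation*}
\mathbf{H}\mathbf{X}_{(1)}(\mathbf{M}_1\otimes\mathbf{A}_1)^T + \mathbf{H}\mathbf{X}_{(1)}(\mathbf{A}_2\otimes\mathbf{M})^T + \mathbf{A}_3\mathbf{X}_{(1)}(\mathbf{H}_3\otimes\mathbf{M})^T = \mathbf{B}_{(1)}.
\end{equation*}
Transposing and right-multiplying by $\mathbf{H}^{-T}$ isolates the combination $\mathbf{A}_3^T\mathbf{H}^{-T}$ acting on the right of $\mathbf{X}_{(1)}^T$. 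Inserting its Schur decomposition $\mathbf{A}_3^T\mathbf{H}^{-T}=\mathbf{Q}\mathbf{R}\mathbf{Q}^*$ and right-multiplying by $\mathbf{Q}$ then yields, with $\mathbf{Z}:=\mathbf{X}_{(1)}^T\mathbf{Q}$ and $[\mathbf{g}_1,\dots,\mathbf{g}_N]:=\mathbf{B}_{(1)}^T\mathbf{H}^{-T}\mathbf{Q}$,
\begin{equation*}
(\mathbf{M}_1\otimes\mathbf{A}_1)\mathbf{Z} + (\mathbf{A}_2\otimes\mathbf{M})\mathbf{Z} + (\mathbf{H}_3\otimes\mathbf{M})\mathbf{Z}\mathbf{R} = [\mathbf{g}_1,\dots,\mathbf{g}_N].
\end{equation*}

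Next I would exploit that $\mathbf{R}$ is upper triangular to decouple the columns. Writing $\mathbf{Z}=[\hat{\mathbf{z}}_1,\dots,\hat{\mathbf{z}}_N]$, the $j$-th column of $\mathbf{Z}\mathbf{R}$ equals $\sum_{i\le j}\hat{\mathbf{z}}_i R_{i,j}$, so the $j$-th column of the system is
\begin{equation*}
\big[(\mathbf{M}_1\otimes\mathbf{A}_1)+(\mathbf{A}_2\otimes\mathbf{M})+R_{j,j}(\mathbf{H}_3\otimes\mathbf{M})\big]\hat{\mathbf{z}}_j = \mathbf{g}_j - (\mathbf{H}_3\otimes\mathbf{M})\mathbf{w}_{j-1},
\end{equation*}
where $\mathbf{w}_{j-1}=\sum_{i<j}\hat{\mathbf{z}}_i R_{i,j}$ collects the already-computed columns (empty for $j=1$). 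Thus $\hat{\mathbf{z}}_1,\hat{\mathbf{z}}_2,\dots$ are obtained by forward substitution. Folding $\hat{\mathbf{z}}_j=\text{vec}(\hat{\mathbf{Z}}_j)$ and $\mathbf{w}_{j-1}=\text{vec}(\mathbf{W}_{j-1})$ back into matrix form via $\text{vec}(\mathbf{P}\mathbf{Y}\mathbf{S})=(\mathbf{S}^T\otimes\mathbf{P})\text{vec}(\mathbf{Y})$, then left-multiplying by $\mathbf{M}^{-1}$ and right-multiplying by $\mathbf{M}_1^{-T}$, turns each column system into precisely \eqref{eq: SimonciniSolver_sylveqn}. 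Reversing the substitution $\mathbf{Z}=\mathbf{X}_{(1)}^T\mathbf{Q}$ then recovers \eqref{eq: 3Dsolversoln}.

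The substantive work is entirely in the careful bookkeeping of these three reductions; there is no analytic obstacle, since nonsingularity of $\mathbf{M}$, $\mathbf{H}$, $\mathbf{M}_1$, $\mathbf{H}_3$ (and of the full operator $\A$) guarantees every inverse and every column Sylvester solve is well posed. The main point requiring care is matching the Kronecker-product ordering to the correct mode-$n$ products and tracking the transposes and conjugates through the Schur step, so that the triangular structure of $\mathbf{R}$ --- the single feature that makes the forward substitution possible --- is correctly aligned with the columns of $\mathbf{Z}$. Since this is exactly the construction of Theorem 2.1 in \cite{simoncini2020numerical}, and the generalization to a non-rank-one $\B$ alters only the definition of $\mathbf{g}_j=\text{vec}(\mathbf{G}_j)$ (and hence of the right-hand side $(\mathbf{M}^{-1}\mathbf{G}_j-\mathbf{W}_{j-1}\mathbf{H}_3^T)\mathbf{M}_1^{-T}$), the argument carries over verbatim.
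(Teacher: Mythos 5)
Your proposal is correct and follows essentially the same route as the paper's proof: mode-1 matricization, a transpose to expose $\mathbf{A}_3^T\mathbf{H}^{-T}$, its Schur decomposition, column-wise forward substitution enabled by the upper-triangularity of $\mathbf{R}$, and unvectorization to the Sylvester equations \eqref{eq: SimonciniSolver_sylveqn}. The only difference is cosmetic bookkeeping -- the paper normalizes by $(\mathbf{H}_3\otimes\mathbf{M})^{-T}$ early so that $\mathbf{Y}\mathbf{R}$ has identity coefficient, whereas you carry $(\mathbf{H}_3\otimes\mathbf{M})$ along and apply $\mathbf{M}^{-1}$ and $\mathbf{M}_1^{-T}$ at the end -- and both arrive at the identical Sylvester system.
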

\begin{proof}
Equation \eqref{eq: 3Dlinearsystem} can be written as
\begin{equation}
	\X\times_1\mathbf{H}\times_2\mathbf{A}_1\times_3\mathbf{M}_1 + \X\times_1\mathbf{H}\times_2\mathbf{M}\times_3\mathbf{A}_2 + \X\times_1\mathbf{A}_3\times_2\mathbf{M}\times_3\mathbf{H}_3 = \B,
\end{equation}
which after performing the mode-1 matricization can be written as
\begin{equation}\label{eq: 1}
	\mathbf{H}\mathbf{X}_{(1)}(\mathbf{A}_2\otimes\mathbf{M}+\mathbf{M}_1\otimes\mathbf{A}_1)^T + \mathbf{A}_3\mathbf{X}_{(1)}(\mathbf{H}_3\otimes\mathbf{M})^T = \mathbf{B}_{(1)}.
\end{equation}
Multiplying equation \eqref{eq: 1} on the left by $\mathbf{H}^{-1}$ and on the right by $\mathbf{H}_3^{-T}\otimes\mathbf{M}^{-T}$, and letting $\mathbf{Y}=\mathbf{X}_{(1)}^T$,
\begin{equation}\label{eq: 2}
	(\mathbf{H}_3^{-1}\mathbf{A}_2\otimes\mathbf{I} + \mathbf{H}_3^{-1}\mathbf{M}_1\otimes\mathbf{M}^{-1}\mathbf{A}_1)\mathbf{Y} + \mathbf{Y}(\mathbf{H}^{-1}\mathbf{A}_3)^T = (\mathbf{H}_3^{-1}\otimes\mathbf{M}^{-1})\mathbf{B}_{(1)}^T\mathbf{H}^{-T}.
\end{equation}
Using $(\mathbf{H}^{-1}\mathbf{A}_3)^T=\mathbf{QRQ}^*$ and multiplying equation \eqref{eq: 2} on the right by $\mathbf{Q}$, we have
\begin{equation}
	(\mathbf{H}_3^{-1}\mathbf{A}_2\otimes\mathbf{I} + \mathbf{H}_3^{-1}\mathbf{M}_1\otimes\mathbf{M}^{-1}\mathbf{A}_1)\mathbf{YQ} + \mathbf{YQR} = (\mathbf{H}_3^{-1}\otimes\mathbf{M}^{-1})\mathbf{B}_{(1)}^T\mathbf{H}^{-T}\mathbf{Q}.
\end{equation}
Let $\mathbf{YQ}\eqqcolon[\hat{\mathbf{z}}_1,...,\hat{\mathbf{z}}_N]$ and $\mathbf{B}_{(1)}^T\mathbf{H}^{-T}\mathbf{Q}\eqqcolon[\mathbf{g}_1,...,\mathbf{g}_N]$. Thanks to the upper triangular form of $\mathbf{R}$, we have for each $j=1,...,N$,
\begin{equation}\label{eq: 3}
	(\mathbf{H}_3^{-1}\mathbf{A}_2\otimes\mathbf{I} + \mathbf{H}_3^{-1}\mathbf{M}_1\otimes\mathbf{M}^{-1}\mathbf{A}_1)\hat{\mathbf{z}}_j + \hat{\mathbf{z}}_jR_{j,j} = (\mathbf{H}_3^{-1}\otimes\mathbf{M}^{-1})\mathbf{g}_j - \mathbf{w}_{j-1},
\end{equation}
where we set $\mathbf{w}_0=\mathbf{0}$ and $\mathbf{w}_{j-1}=[\hat{\mathbf{z}}_1,...,\hat{\mathbf{z}}_{j-1}]R_{1:j-1,j}$ for $j=2,...,N$. Let $\hat{\mathbf{Z}}_j$, $\mathbf{W}_{j-1}$ and $\mathbf{G}_j$ be the matrices such that $\hat{\mathbf{z}}_j=\text{vec}(\hat{\mathbf{Z}}_j)$, $\hat{\mathbf{w}}_{j-1}=\text{vec}(\hat{\mathbf{W}}_{j-1})$ and $\hat{\mathbf{g}}_j=\text{vec}(\hat{\mathbf{G}}_j)$. Then, after unvectorizing equation \eqref{eq: 3}, we have
\begin{equation}\label{eq: 4}
	\hat{\mathbf{Z}}_j(\mathbf{H}_3^{-1}\mathbf{A}_2)^T + \mathbf{M}^{-1}\mathbf{A}_1\hat{\mathbf{Z}}_j(\mathbf{H}_3^{-1}\mathbf{M}_1)^T + R_{j,j}\hat{\mathbf{Z}}_j = \mathbf{M}^{-1}\mathbf{G}_j\mathbf{H}_3^{-T} - \mathbf{W}_{j-1}.
\end{equation}
Rearranging equation \eqref{eq: 4} and multiplying on the right by $(\mathbf{H}_3^{-1}\mathbf{M}_1)^{-T}$, we have
\begin{equation}
	\mathbf{M}^{-1}\mathbf{A}_1\hat{\mathbf{Z}}_j + \hat{\mathbf{Z}}_j(R_{j,j}\mathbf{H}_3^T\mathbf{M}_1^{-T} + \mathbf{A}_2^T\mathbf{M}_1^{-T}) = \mathbf{M}^{-1}\mathbf{G}_j\mathbf{M}_1^{-T} - \mathbf{W}_{j-1}\mathbf{H}_3^T\mathbf{M}_1^{-T}.
\end{equation}
Solving for $\hat{\mathbf{Z}}_j$ and computing $\hat{\mathbf{z}}_j=\text{vec}(\hat{\mathbf{Z}}_j)$, we get equation \eqref{eq: 3Dsolversoln}.
\end{proof}

\section{Local Macroscopic Conservative (LoMaC) truncation procedure \cite{guo2024local}, extended to third-order Tucker tensor solutions}\label{app: lomac}
We extend the algorithm from \cite{guo2024local} to three-dimensional solutions stored in a low-multilinear rank Tucker tensor decomposition. This procedure is generally applied when evolving a probability distribution function, e.g., in kinetic simulations. Depending on the physical invariants of the model, the zeroth (mass), first (momentum), and/or second (energy) moments of the solution can be conserved. We present the LoMaC algorithm that conserves mass, momentum, and energy. If only mass or mass+momentum conservation is desired, then the following LoMaC procedure can be reduced and simplified as needed. The purpose of this appendix is only to overview the main ideas in order to help the reader follow our publically available code more easily. We refer the reader to the original paper \cite{guo2024local} for specifics and finer details.

Broadly speaking, the idea goes as follows. Scale the distribution function; project the scaled distribution function onto the subspace that conserves the zeroth, first, and second moments; truncate the part in the orthogonal complement since there is zero mass, momentum, and energy. Since the subspace that conserves the moments is of dimension three, the multilinear rank of this part is $(3,3,3)$. The part of the solution that contains zero mass, momentum, and energy, -which is high multilinear rank-, can usually be approximated by a low-multilinear rank tensor. Letting $\mathbf{f}^*\in\mathbb{R}^{N_x\times N_y\times N_z}$ be the (pre-truncated) tensor solution,
\begin{equation}
	\mathbf{f}^* \approx \mathbf{f}_1^{M} + \mathcal{T}_{\varepsilon}(\mathbf{f}_2),
\end{equation}
where $\mathbf{f}_1^{M}$ is the part of the solution that conserves the macroscopic quantites, and $\mathbf{f}_2=(I-P)(\mathbf{f}^*)$ is the part of the solution that contains zero mass, momentum, and energy. The orthogonal projection is performed with respect to the weighted $\ell^2$ inner product $\langle\cdot,\cdot\rangle_{\mathbf{w}}$, where we let $\mathbf{w}$ be a Gaussian distribution that decays fast enough for integrability; other weight functions could be used. In \cite{guo2024local}, the truncated part is scaled and rescaled by $\sqrt{\mathbf{w}}$. However, since $\sqrt{\mathbf{w}}$ is numerically zero near the boundary of the domain, we found this to cause numerical instabilities in our simulations. In the 2d-RAIL paper, the weight function could be perturbed by a small robustness parameter to avoid this issue. In the 3d-RAIL method, we found that removing this scaling and rescaling also resolved the issue; the orthogonal projection that obtains $\mathbf{f}_1^M$ and $\mathbf{f}_2$ was still with respect to the weighted inner product. When truncating $\mathbf{f}_2$, a small amount of artificial mass, momentum and energy is re-introduced into the system as a result of the HOSVD/MLSVD; this mass, momentum and energy is non-physical and from numerical approximation. We simply project out this non-physical part,
\begin{equation}
	\mathbf{f}^*\approx \mathbf{f} \coloneqq \mathbf{f}_1^{M} + (I-P)(\mathcal{T}_{\varepsilon}(\mathbf{f}_2)).
\end{equation}

Deriving the Tucker decompositions of each part follows the same procedure as in \cite{guo2024local}, with $\mathbf{f}_1^M$ being a $(3,3,3)$ multilinear rank Tucker tensor whose core tensor is composed of the macroscopic quantities. Unlike the step-and-truncate method in \cite{guo2024local}, we need the factor matrices of the Tucker decomposition for $\mathbf{f}$ to be orthonormal in the \textit{unweighted} $\ell^2$ inner product; the weight inner product was just for the LoMaC projection. As such, we orthonormalize the factor matrices using a reduced QR factorization, followed by performing the mode-$n$ product between the core tensor and the upper triangular $R$ matrices from the QR factorizations. We refer the reader to \cite{guo2024local} and our publically provided code for more details.

\clearpage
\section{Butcher tables for implicit-explicit Runge-Kutta methods \cite{Ascher1997}}

\begin{table}[h!]
    \centering
    \begin{minipage}[b]{0.49\linewidth}
    \centering
    \caption{IMEX(1,1,1) -- Implicit Table}
    \begin{tabular}{c|ll}
        0&0&0\\
        1&0&1\\
        \hline
        &0&1
        \end{tabular}
    \end{minipage}
    \begin{minipage}[b]{0.49\linewidth}
    \centering
    \caption{IMEX(1,1,1) -- Explicit Table}
    \begin{tabular}{c|ll}
        0&0&0\\
        1&1&0\\
        \hline
        &1&0
        \end{tabular}
    \end{minipage}
\end{table}

\begin{table}[h!]
    \centering
    \begin{minipage}[b]{0.49\linewidth}
    \centering
    \caption{IMEX(2,2,2) -- Implicit Table}
    \begin{tabular}{c|lll}
        0&0&0&0\\
        $\nu$&0&$\nu$&0\\
        1&0&$1-\nu$&$\nu$\\
        \hline
        &0&$1-\nu$&$\nu$
        \end{tabular}
    \end{minipage}
    \begin{minipage}[b]{0.49\linewidth}
    \centering
    \caption{IMEX(2,2,2) -- Explicit Table}
    \begin{tabular}{c|lll}
        0&0&0&0\\
        $\nu$&$\nu$&0&0\\
        1&$\delta$&$1-\delta$&0\\
        \hline
        &$\delta$&$1-\delta$&0
        \end{tabular}
    \end{minipage}
    Let $\nu=1-\sqrt{2}/2$ and $\delta = 1 - 1/(2\nu)$.
\end{table}

\begin{table}[h!]
    \centering
    \begin{minipage}[b]{0.49\linewidth}
    \centering
    \caption{IMEX(4,4,3) -- Implicit Table}
    \begin{tabular}{c|lllll}
        0&0&0&0&0&0\\
        1/2&0&1/2&0&0&0\\
        2/3&0&1/6&1/2&0&0\\
        1/2&0&-1/2&1/2&1/2&0\\
        1&0&3/2&-3/2&1/2&1/2\\
        \hline
        &0&3/2&-3/2&1/2&1/2
        \end{tabular}
    \end{minipage}
    \begin{minipage}[b]{0.49\linewidth}
    \centering
    \caption{IMEX(4,4,3) -- Explicit Table}
    \begin{tabular}{c|lllll}
        0&0&0&0&0&0\\
        1/2&1/2&0&0&0&0\\
        2/3&11/18&1/18&0&0&0\\
        1/2&5/6&-5/6&1/2&0&0\\
        1&1/4&7/4&3/4&-7/4&0\\
        \hline
        &1/4&7/4&3/4&-7/4&0
        \end{tabular}
    \end{minipage}
\end{table}

\medskip

\printbibliography

\end{document}